
\documentclass{amsart}%
\usepackage{amssymb}
\usepackage{amsfonts}
\usepackage{geometry}
\usepackage{graphicx}
\usepackage{amsmath}%
\setcounter{MaxMatrixCols}{30}
\providecommand{\U}[1]{\protect\rule{.1in}{.1in}}
\newtheorem{theorem}{Theorem}
\theoremstyle{plain}

\newtheorem{conjecture}[theorem]{Conjecture}

\newtheorem{definition}[theorem]{Definition}

\newtheorem{lemma}[theorem]{Lemma}
\newtheorem{notation}[theorem]{Notation}

\newtheorem{remark}[theorem]{Remark}

\numberwithin{equation}{section}

\geometry{left=1in,right=1in,top=1in,bottom=1in}
\begin{document}
\title[Trilinear characterizations of the Fourier extension conjecture]{Trilinear characterizations of the Fourier extension conjecture on the
paraboloid in three dimensions}
\author{Cristian Rios}
\address{University of Calgary\\
Calgary, Alberta, Canada}
\email{crios@ucalgary.ca}
\author{Eric Sawyer}
\address{McMaster University\\
Hamilton, Ontario, Canada}
\email{sawyer@mcmaster.ca}
\thanks{Eric Sawyer's research supported in part by a grant from the National Sciences
and Engineering Research Council of Canada}
\maketitle

\begin{abstract}
We first prove that a local trilinear extension inequality on the paraboloid
in $\mathbb{R}^{3}$ is equivalent to the Fourier restriction conjecture in
$\mathbb{R}^{3}$, which may be viewed as a companion result to either the
transversal trilinear inequality of Bennett, Carbery and Tao in \cite{BeCaTa},
or to the bilinear characterization due to Tao, Vargas and Vega in
\cite{TaVaVe}. Namely, we prove that the paraboloid restriction conjecture
holds in $\mathbb{R}^{3}$ \emph{if and only if} for every $q>3$ there is
$\nu>0$ such that the trilinear inequality,
\[
\left\Vert \mathcal{E}f_{1}\mathcal{E}f_{2}\mathcal{E}f_{3}\right\Vert
_{L^{\frac{q}{3}}\left(  B\left(  0,R\right)  \right)  }\leq C_{\nu
}R^{\varepsilon}\mathop{\displaystyle \prod }\limits_{k=1}^{3}\left\Vert
f_{k}\right\Vert _{L^{\infty}\left(  U\right)  }\ ,\ \ \ \ \ \text{for
}R>1,\varepsilon>0
\]
holds when taken over all $f_{k}\in L^{\infty}\left(  U_{k}\right)  $, and all
triples $\left(  U_{1},U_{2},U_{3}\right)  $ of squares that satisfy the $\nu
$-disjoint condition%
\[
\mathop{\rm diam}\left[  \Phi\left(  U_{k}\right)  \right]  \approx\nu\text{
and }\mathop{\rm dist}\left[  \Phi\left(  U_{k}\right)  ,\Phi\left(
U_{j}\right)  \right]  \geq\nu,\text{ for }1\leq j,k\leq3,
\]
where $\Phi$ is the standard parameterization of the paraboloid $\mathbb{P}%
^{2}$ and $\mathcal{E}f=\left[  \Phi_{\ast}\left(  f\left(  x\right)
dx\right)  \right]  ^{\wedge}$ is the associated Fourier extension operator.
The proof follows an argument of Bourgain and Guth \cite{BoGu}, but exploiting
the fact that the problematic Case 3 in their argument dissolves due to the
$\nu$-disjoint assumption, as opposed to the $\nu$-transversal assumption in
\cite{BeCaTa}.

Then we prove the equivalence of the above trilinear Fourier extension
conjecture with the special case of testing a local trilinear inequality over
certain smooth Alpert pseudoprojections $\mathsf{Q}_{s,U}^{\eta}f$,
representing the weakest such inequality equivalent to the Fourier extension
conjecture that the authors could find. This special inequality has the
following form. For every $q>3$ there is $\nu>0$ such that
\begin{align*}
&  \left(  \int_{B\left(  0,2^{s}\right)  \setminus B\left(  0,2^{s-1}\right)
}\left(  \left\vert \mathcal{E}\mathsf{Q}_{s_{1},U_{1}}^{\eta}f_{1}\left(
\xi\right)  \right\vert \ \left\vert \mathcal{E}\mathsf{Q}_{s_{2},U_{2}}%
^{\eta}f_{2}\left(  \xi\right)  \right\vert \ \left\vert \mathcal{E}%
\mathsf{Q}_{s_{3},U_{3}}^{\eta}f_{3}\left(  \xi\right)  \right\vert \right)
^{\frac{q}{3}}\ d\xi\right)  ^{\frac{3}{q}}\\
&  \ \ \ \ \ \ \ \ \ \ \ \ \ \ \ \ \ \ \ \ \lesssim2^{\varepsilon s}\left\Vert
f_{1}\right\Vert _{L^{\infty}}\left\Vert f_{2}\right\Vert _{L^{\infty}%
}\left\Vert f_{3}\right\Vert _{L^{\infty}}\ ,
\end{align*}
for all $\nu$-disjoint triples $\left(  U_{1},U_{2},U_{3}\right)  $, all
$\varepsilon>0$, $s\in\mathbb{N}$ and $s_{1}\leq s_{2}\leq s_{3}$ with
$s_{2},s_{3}$ arbitrarily close to $s$.

The extension to other quadratic surfaces of positive Gaussian curvature in
$\mathbb{R}^{3}$ is straightforward.

\end{abstract}
\tableofcontents

\section{Introduction}

In this paper we show that a variant of the trilinear Fourier extension
inequality of Bennett, Carbery and Tao \cite{BeCaTa}\footnote{See \cite{Tao2}
for a sharpening of these results.} is equivalent to the Fourier extension
conjecture in dimension three, at least for quadratic surfaces of positive
Gaussian curvature, which include the paraboloid. This can also be viewed as a
companion result to the bilinear characterizations in Tao, Vargas and Vega
\cite{TaVaVe}. Of course the trilinear characterization implies the
corresponding bilinear one by H\"{o}lder's inequality\footnote{applied with
exponents $\left(  3,3,3\right)  $ to the factorization%
\[
\left\vert \mathcal{E}f_{1}\mathcal{E}f_{2}\mathcal{E}f_{3}\right\vert
=\sqrt{\left\vert \mathcal{E}f_{1}\mathcal{E}f_{2}\right\vert }\sqrt
{\left\vert \mathcal{E}f_{2}\mathcal{E}f_{3}\right\vert }\sqrt{\left\vert
\mathcal{E}f_{3}\mathcal{E}f_{1}\right\vert }.
\]
It is possible that the bilinear proof in \cite{TaVaVe} can be adapted to the
disjoint trilinear setting, but the Whitney condition in three dimensions
complicates matters, and we will not pursue this here.}.

We will weaken the transversality hypothesis in \cite{BeCaTa} to a disjoint
hypothesis (while retaining the embeddings into the paraboloid), as well as
weakening the bound in the conclusion, and then adapt an argument of Bourgain
and Guth \cite{BoGu} to derive the Fourier extension conjecture as a
consequence of this disjoint Fourier trilinear inequality. The key point here
is that the problematic Case 3 in the argument of \cite[Section 2]{BoGu} is
eliminated, along with their restriction to $p>\frac{10}{3}$, by the disjoint assumption.

In Theorem \ref{main Alpert} in the third section of the paper, we state and
prove another characterization of the Fourier extension inequality, in which
we restrict the functions in the disjoint trilinear conjecture to special
smooth Alpert pseudoprojections, representing the `simplest' characterization
that the authors could find. In the appendix immediately following that, we
sketch how a square function modification of the arguments used here could
give an alternate, and arguably simpler, proof of the \emph{probabilistic}
Fourier extension theorem in \cite{Saw7} for the paraboloid in three
dimensions\footnote{The result in \cite{Saw7} was stated only for the sphere,
but easily extends to smooth compact surfaces with positive Hessian.}.

We will only give details of proofs for our results in the special case of the
paraboloid $\mathbb{P}^{2}$ in three dimensions here, since the arguments are
virtually the same for quadratic surfaces with positive Hessian in
$\mathbb{R}^{3}$. Denote the Fourier extension operator $\mathcal{E}$\ by,%
\[
\mathcal{E}f\left(  \xi\right)  \equiv\left[  \Phi_{\ast}\left(  f\left(
x\right)  dx\right)  \right]  ^{\wedge}\left(  \xi\right)
,\ \ \ \ \ \text{for }\xi\in\mathbb{R}^{3},
\]
where $\Phi_{\ast}\left(  f\left(  x\right)  dx\right)  $ denotes the
pushforward of the measure $f\left(  x\right)  dx$ supported in $U\subset
B_{\mathbb{R}^{2}}\left(  0,\frac{1}{2}\right)  $ to the paraboloid
$\mathbb{P}^{2}$ under the usual parameterization $\Phi:U\rightarrow
\mathbb{P}^{2}$ by $\Phi\left(  x\right)  =\left(  x_{1},x_{2},x_{1}^{2}%
+x_{2}^{2}\right)  $ for $x=\left(  x_{1},x_{2}\right)  \in U$.

\begin{conjecture}
[Fourier extension]The Fourier extension conjecture for the paraboloid
$\mathbb{P}^{2}$ in $\mathbb{R}^{3}$ is the assertion that%
\begin{equation}
\left\Vert \mathcal{E}f\right\Vert _{L^{q}\left(  \mathbb{R}^{3}\right)
}\lesssim\left\Vert f\right\Vert _{L^{q}\left(  U\right)  }%
,\ \ \ \ \ \text{for }q>3.\label{FEC}%
\end{equation}
which we also denote by $\mathcal{E}\left(  \otimes_{1}L^{q}\rightarrow
L^{q}\right)  $.
\end{conjecture}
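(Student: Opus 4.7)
The plan is to prove the Fourier extension conjecture by appealing to the main equivalence theorem of this paper: it suffices to verify the trilinear disjoint extension inequality displayed in the abstract. That equivalence replaces the sharp global linear bound (\ref{FEC}) by a local trilinear estimate with arbitrary polynomial $R^{\varepsilon}$ loss and with a separation hypothesis that is merely $\nu$-disjoint rather than $\nu$-transversal. I would therefore devote the rest of the work to producing the trilinear estimate.

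To produce that estimate, I would follow the broken induction-on-scales framework of Bourgain and Guth \cite{BoGu}. Fix $q>3$ and a large $R>1$, decompose $U$ into caps $\tau$ of diameter $R^{-1/2}$, and write $\mathcal{E}f=\sum_{\tau}\mathcal{E}f_{\tau}$. On each unit ball $B$ inside $B(0,R)$, partition the caps according to whether most of the mass of $\mathcal{E}f$ concentrates on a single cap (Case 1), on a planar cluster of caps (Case 2), or spreads over three genuinely $\nu$-disjoint caps (Case 3). In Cases 1 and 2 one iterates at a smaller scale after a parabolic rescaling and uses the induction hypothesis. In Case 3 one invokes the hypothesized trilinear inequality.

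The payoff of weakening transversality to disjointness shows up exactly here. In \cite{BoGu} one pays a cardinality factor when summing the transversal trilinear bound over nearly coplanar cap triples, and this is what forces the restriction $p>\frac{10}{3}$. Under the disjoint hypothesis, the inequality applies directly to every $\nu$-disjoint triple, coplanar or not, so this loss is eliminated and the induction closes for all $q>3$.

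The main obstacle is of course that the trilinear disjoint inequality is itself open, and this plan merely trades one conjecture for another (albeit a formally weaker and more local one). As a further reduction, the paper shows that it is enough to bound the Alpert pseudoprojection version of the trilinear inequality, which the authors present as the weakest form known to be equivalent to (\ref{FEC}); any genuine progress would presumably have to come from a method tailored to disjointness rather than transversality, perhaps via polynomial partitioning adapted to cap separation in the base $U$ rather than to transversality on the paraboloid.
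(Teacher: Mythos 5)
Your proposal does not prove the statement, and it could not be expected to: the statement is the Fourier extension \emph{conjecture}, which this paper never claims to prove. What the paper establishes (Theorem \ref{main}, via Theorem \ref{Loc lin}) is an \emph{equivalence}: the conjecture holds if and only if the $\nu$-disjoint trilinear inequality $\mathcal{E}_{\limfunc{disj}\nu}\left(\otimes_{3}L^{\infty}\rightarrow L^{\frac{q}{3}};\varepsilon\right)$ holds, and further if and only if the smooth Alpert trilinear inequality holds. Your plan is essentially a restatement of the paper's $\left(3\right)\Longrightarrow\left(1\right)$ direction — Bourgain--Guth pigeonholing into separated, clustered and dipole interactions, parabolic rescaling plus induction on scales in the non-separated cases, the hypothesized trilinear bound in the separated case, then $\varepsilon$-removal and Nikishin--Maurey--Pisier factorization (which you subsume by citing the equivalence theorem). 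You candidly acknowledge the gap yourself: the disjoint trilinear inequality is an open conjecture, so the argument reduces one open problem to another and produces no proof of (\ref{FEC}). That is precisely the content of the paper, not a resolution of the conjecture, so as a proof attempt it has a fatal missing step — no argument is supplied, or even sketched, for the trilinear input.

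Two smaller inaccuracies in your account of the mechanism are worth noting. First, in the paper's adaptation the decomposition is into squares of a \emph{fixed} side length $2^{-\lambda}$ with $\lambda$ depending only on $q$ (so that $\nu\approx 2^{-\frac{3q}{q-3}}$), not into caps of diameter $R^{-1/2}$ as in the standard broad--narrow scheme; the induction on scales then runs through the quantities $Q_{2^{-\lambda'}R}$, $Q_{2^{-\lambda}R}$, $Q_{2^{-\beta\lambda}R}$ and an absorption argument. Second, the case labels are permuted relative to the paper: there it is \textbf{Case 1} (separated interaction) that invokes the trilinear hypothesis, while \textbf{Case 2} (clustered) and \textbf{Case 3} (dipole) are handled by parabolic rescaling; the point of disjointness is that the problematic \textbf{Case 3} of \cite{BoGu} collapses, eliminating the $p>\frac{10}{3}$ restriction — your description of why that restriction disappears is a reasonable gloss but is not the detailed accounting the paper gives.
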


\subsection{Statements of main theorems}

The analogous Fourier extension conjecture for the $n$-dimensional sphere
$\mathbb{S}^{n-1}$ was made in 1967 by E. M. Stein, see e.g. \cite[see the
Notes at the end of Chapter IX, p. 432, where Stein proved the restriction
conjecture for $1\leq p<\frac{4n}{3n+1}$]{Ste2} and \cite{Ste}. The
two-dimensional case of the Fourier extension conjecture was proved over half
a century ago by L. Carleson and P. Sj\"{o}lin \cite{CaSj}, see also C.
Fefferman \cite{Fef} and A. Zygmund \cite{Zyg}. A web search reveals much
progress on extension theorems, as well as Kakeya theorems, in the ensuing
years. In particular, the Kakeya set conjecture has recently been proved in
dimension $n=3$ by Hong Wang and Joshua Zahl \cite{WaZa}.

Here is our weakening of the transversality condition as introduced by Muscalu
and Oliveira \cite{MuOl}, which we refer to as a \emph{disjoint} condition.

\begin{definition}
Let $\varepsilon,\nu>0$, $1<q<\infty$ and $q\leq p\leq\infty$. Denote by
$\mathcal{E}_{\mathop{\rm disj}\nu}\left(  \otimes_{3}L^{p}\rightarrow
L^{\frac{q}{3}};\varepsilon\right)  $ the trilinear Fourier extension
inequality%
\begin{equation}
\left\Vert \mathcal{E}f_{1}\ \mathcal{E}f_{2}\ \mathcal{E}f_{3}\right\Vert
_{L^{\frac{q}{3}}\left(  B\left(  0,R\right)  \right)  }\leq\left(
C_{\varepsilon,\nu,p,q}R^{\varepsilon}\right)  ^{3}\left\Vert f_{1}\right\Vert
_{L^{p}\left(  U_{1}\right)  }\left\Vert f_{2}\right\Vert _{L^{p}\left(
U_{2}\right)  }\left\Vert f_{3}\right\Vert _{L^{p}\left(  U_{3}\right)
}\ ,\label{concl thm}%
\end{equation}
taken over all $R\geq1$, $f_{k}\in L^{p}\left(  U_{k}\right)  $, and all
triples $\left(  U_{1},U_{2},U_{3}\right)  \subset U^{3}$ that satisfy the
$\nu$-disjoint condition,%
\begin{equation}
\mathop{\rm diam}\left[  \Phi\left(  U_{k}\right)  \right]  \approx\nu\text{
and }\mathop{\rm dist}\left[  \Phi\left(  U_{k}\right)  ,\Phi\left(
U_{j}\right)  \right]  \geq\nu,\text{ for }1\leq j,k\leq3,\label{weak sep}%
\end{equation}
and where the constant $C_{\varepsilon,\nu,p,q}$ is independent of $R\geq1 $
and the functions $f_{k}\in L^{p}\left(  U_{k}\right)  $.
\end{definition}

Note that (\ref{concl thm}) is invariant under translation of the ball
$B\left(  0,R\right)  $ in $\mathbb{R}^{3}$.

Our first theorem is the equivalence of Fourier extension and disjoint
trilinear Fourier extension.

\begin{theorem}
\label{main}The Fourier extension conjecture holds for the paraboloid
$\mathbb{P}^{2}$ in $\mathbb{R}^{3}$ \emph{if and only if} for every $q>3$
there is $\nu>0$ such that the disjoint trilinear inequality $\mathcal{E}%
_{\mathop{\rm disj}\nu}\left(  \otimes_{3}L^{q}\rightarrow L^{\frac{q}{3}%
};\varepsilon\right)  $ holds for all $\varepsilon>0$. More generally, the
following statements are equivalent:

\begin{enumerate}
\item $\mathcal{E}\left(  \otimes_{1}L^{q}\rightarrow L^{q}\right)  $ for all
$q>3$,

\item For every $q>3$ there is $\nu>0$ such that $\mathcal{E}_{\mathop{\rm
disj}\nu}\left(  \otimes_{3}L^{q}\rightarrow L^{\frac{q}{3}};\varepsilon
\right)  $ for all $\varepsilon>0$,

\item For every $q>3$ there is $\nu>0$ such that $\mathcal{E}_{\mathop{\rm
disj}\nu}\left(  \otimes_{3}L^{\infty}\rightarrow L^{\frac{q}{3}}%
;\varepsilon\right)  $ for all $\varepsilon>0$.
\end{enumerate}
\end{theorem}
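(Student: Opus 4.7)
The proof proceeds by establishing the chain $(1) \Rightarrow (2) \Rightarrow (3) \Rightarrow (1)$, with essentially all the work concentrated in the last step.

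The first two implications are formal. For $(1) \Rightarrow (2)$, H\"older's inequality on $B(0,R)$ gives
\[
\|\mathcal{E}f_1\,\mathcal{E}f_2\,\mathcal{E}f_3\|_{L^{q/3}(B(0,R))} \leq \prod_{k=1}^{3} \|\mathcal{E}f_k\|_{L^q(\mathbb{R}^3)} \lesssim \prod_{k=1}^{3} \|f_k\|_{L^q(U_k)},
\]
uniformly in $R$ and without invoking the disjointness of $(U_1,U_2,U_3)$; the $R^\varepsilon$ factor in (2) is trivially absorbed. For $(2) \Rightarrow (3)$, since each $U_k\subset U$ has bounded Lebesgue measure, $\|f_k\|_{L^q(U_k)} \leq |U|^{1/q}\|f_k\|_{L^\infty(U_k)}$, and (3) follows from (2) with the same $R^\varepsilon$.

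The substantive direction is $(3) \Rightarrow (1)$, which I would carry out via the Bourgain--Guth broad/narrow dichotomy \cite{BoGu}. Fix $q > 3$ and the corresponding $\nu$ from (3), and decompose the paraboloid into $\nu$-caps $\{\tau\}$. For each $\xi \in B(0,R)$ split $\mathcal{E}f(\xi) = \sum_\tau \mathcal{E}f_\tau(\xi)$, and pigeonhole on the sizes $|\mathcal{E}f_\tau(\xi)|$: either (narrow case) a single cap essentially dominates, so $|\mathcal{E}f(\xi)| \lesssim \nu^{-O(1)} \max_\tau |\mathcal{E}f_\tau(\xi)|$, or (broad case) at least three caps contribute comparably, and from them one selects a $\nu$-disjoint triple satisfying (\ref{weak sep}). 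The narrow contribution is handled by parabolic rescaling of each cap back to the unit scale, reducing to the same $L^q$ estimate at the next dyadic level and iterating. The broad contribution is estimated directly by the trilinear hypothesis (3). This is the crucial place where the relaxation from transversal to disjoint pays off: the Bennett--Carbery--Tao inequality of \cite{BeCaTa} only provides broad-case control when the three selected caps are $\nu$-\emph{transversal}, and extracting such a triple in \cite[Section 2]{BoGu} forces additional multilinear Kakeya input and the restriction $p > 10/3$; under the disjoint hypothesis, any three well-separated caps suffice, so the selection is elementary and the argument extends down to the full range $q > 3$.

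Summing the narrow and broad contributions across the $O(\log R)$ dyadic magnitude levels and iterating the narrow alternative yields $\|\mathcal{E}f\|_{L^q(B(0,R))} \lesssim_\varepsilon R^\varepsilon \|f\|_{L^q(U)}$ for every $\varepsilon > 0$ and every $q > 3$, with the polynomial $\nu^{-O(1)}$ losses absorbed by $R^\varepsilon$ after choosing the iteration depth as a slowly growing function of $R$. A final application of Tao's $\varepsilon$-removal lemma converts this local-with-loss estimate into the global $L^q \to L^q$ bound (\ref{FEC}), closing the loop. The principal obstacle I expect is the broad/narrow bookkeeping, in particular ensuring that the $\nu$-dependence does not accumulate fatally through the iteration and that pigeonholing into dyadic amplitude classes preserves the hypotheses of (3); this is precisely the step that Bourgain and Guth had to navigate with transversal triples, and the disjoint hypothesis should make it both simpler and valid in the full range.
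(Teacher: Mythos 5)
Your easy implications $(1)\Rightarrow(2)\Rightarrow(3)$ coincide with the paper's, and your overall plan for $(3)\Rightarrow(1)$ --- Bourgain--Guth pigeonholing, the disjoint trilinear hypothesis for the separated configuration, parabolic rescaling for the clustered configuration, and $\varepsilon$-removal at the end --- is the paper's strategy. However, your case analysis has a genuine gap exactly at the point the paper is designed to address. The dichotomy ``one cap essentially dominates'' versus ``at least three caps contribute comparably, from which a $\nu$-disjoint triple can be selected'' is not exhaustive: when the caps of near-maximal contribution form two separated clusters (the paper's \textbf{Case 3}, dipole interaction, Subsection \ref{Sub Case 3}), there is no $\nu$-disjoint triple of comparably large caps, so hypothesis (3) cannot be invoked, and no single fine cap dominates either. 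If you instead retreat to the trivially true pointwise bound $\left\vert \mathcal{E}f\left( \xi \right) \right\vert \lesssim \nu ^{-2}\max_{\tau }\left\vert \mathcal{E}f_{\tau }\left( \xi \right) \right\vert $, the narrow estimate is too lossy to close the induction on scales: summing the $\approx \nu ^{-2}$ caps and rescaling each one gains only $\nu ^{2-\frac{6}{q}}$, so with that constant the per-step factor in the induction is $\nu ^{-\frac{6}{q}}>1$ for every $q$. The paper avoids this by pigeonholing with near-maximal weight thresholds, so that in the clustered cases everything except a $2^{-\delta \lambda }w_{\ast }^{a}$ remainder lies in one (Case 2) or two (Case 3) coarser caps, each rescaled as a single unit with only a bounded loss; disjointness makes the two-cluster case easy, but it must still be identified and treated.

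Two further points. Your absorption mechanism --- ``polynomial $\nu ^{-O(1)}$ losses absorbed by $R^{\varepsilon }$ after choosing the iteration depth as a slowly growing function of $R$'' --- cannot work: if each rescaling step loses a factor $\nu ^{-a}$, then after the $\log R/\log \left( 1/\nu \right) $ steps needed to reach unit scale the accumulated loss is $R^{a}$, a fixed power of $R$ no matter how the depth or $\nu $ is tuned. The argument closes only because, for $q>3$, the net per-step factors $2^{\left( \frac{6}{q}-2\right) \lambda ^{\prime }}$, $2^{\left( \frac{3}{q}+\delta -4\right) \lambda }$ and $2^{-\beta \left( 2-\frac{4}{q}\right) \lambda }$ are all small, permitting absorption of $\frac{1}{2}Q_{R}$ as in (\ref{namely}) and Remark \ref{param}; note also that $\nu $ is fixed there, roughly $2^{-\frac{3q}{q-3}}$, depending only on $q$, since statement (3) supplies a single $\nu $ per $q$. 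Finally, hypothesis (3) controls the broad part only by $\left\Vert f\right\Vert _{L^{\infty }}$, so the iteration yields the local bound $Q_{R}^{\left( q\right) }\lesssim R^{\varepsilon }$, i.e. $L^{\infty }\left( \sigma \right) \rightarrow L^{q}$, whereas your sketch writes $\left\Vert f\right\Vert _{L^{q}\left( U\right) }$ on the right from the outset. To reach statement (1) one must, after $\varepsilon $-removal, upgrade the global $L^{\infty }\left( \sigma \right) \rightarrow L^{q}$ bound to $L^{q}\left( \sigma \right) \rightarrow L^{q}$, which the paper does via Nikishin--Maurey--Pisier factorization \cite{Bus}; this step is missing from your proposal.
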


The proof shows that we can take $\nu=O\left(  2^{-\frac{3q}{q-3}}\right)  $,
see (\ref{nu}). We exploit the fact that the argument in \cite[See the Remark
after (6.19) on page 1248.]{BoGu} simply requires a trilinear inequality for a
sufficiently small separation constant $\nu>0$, depending only on $q>3$. This
theorem also gives a characterization of the B\^{o}chner-Riesz inequality on
the paraboloid (and also on quadratic surfaces of positive Gaussian curvature
as mentioned above) in view of the equivalence of Fourier extension and
B\^{o}chner-Riesz for such surfaces, see Carbery \cite{Car} and \cite{Tao} for this.

\begin{remark}
It is not clear that the Fourier extension inequality for the \emph{sphere}
can be characterized by the methods used here, since when considering
(\ref{FEC}) for a surface $S$ without invariance under a quadratic
dilation,\ such as the sphere, the proof of Case 2 below requires introduction
of perturbations $S^{\prime}$ of the surface $S$ as well, which then prevents
a characterization of (\ref{FEC}) in terms of (\ref{concl thm}) - see
\cite[page 297]{TaVaVe} where this is discussed. Nevertheless, we conjecture
that the methods used here can be adapted to prove that (\ref{FEC}) holds for
\emph{all} surfaces $S$ in $\mathbb{R}^{3}$ of positive Gaussian curvature
that are bounded by some fixed constant $A$, if and only if (\ref{concl thm})
holds uniformly for the same class of surfaces\footnote{the reason being that
the parabolic rescalings used in the proof do not exit the class of surfaces
under consideration.}.
\end{remark}

Theorem \ref{main} can be put into context as follows. In the special case
where the patches $\Phi\left(  U_{1}\right)  ,\Phi\left(  U_{2}\right)
,\Phi\left(  U_{3}\right)  $ are $\nu$-transverse, then the trilinear
inquality (\ref{concl thm}) is the trilinear inequality proved by Bennett,
Carbery and Tao \cite{BeCaTa}. In the more general case when the patches
$\Phi\left(  U_{1}\right)  ,\Phi\left(  U_{2}\right)  ,\Phi\left(
U_{3}\right)  $ are merely assumed $\nu$-disjoint, then the trilinear
inequality (\ref{concl thm}) implies the Fourier extension conjecture.

\subsection{Easy directions of the proof}

The implication $\left(  1\right)  \Longrightarrow\left(  2\right)  $ of
Theorem \ref{main} follows from applying H\"{o}lder's inequality with
exponents $\frac{1}{3},\frac{1}{3},\frac{1}{3}$ to show that (\ref{FEC})
implies (\ref{concl thm}) with $\varepsilon=0$ and even without the $\nu
$-disjoint condition (\ref{weak sep}):%
\begin{align*}
&  \left(  \int_{\mathbb{R}^{3}}\left\vert \mathcal{E}f_{1}\left(  \xi\right)
\ \mathcal{E}f_{2}\left(  \xi\right)  \ \mathcal{E}f_{3}\left(  \xi\right)
\right\vert ^{\frac{q}{3}}d\xi\right)  ^{\frac{3}{q}}\lesssim\left(
\int_{\mathbb{R}^{3}}\left\vert \mathcal{E}f_{1}\left(  \xi\right)
\right\vert ^{q}d\xi\right)  ^{\frac{1}{q}}\left(  \int_{\mathbb{R}^{3}%
}\left\vert \mathcal{E}f_{2}\left(  \xi\right)  \right\vert ^{q}d\xi\right)
^{\frac{1}{q}}\left(  \int_{\mathbb{R}^{3}}\left\vert \mathcal{E}f_{3}\left(
\xi\right)  \right\vert ^{q}d\xi\right)  ^{\frac{1}{q}}\\
&  \lesssim\left(  \int_{U}\left\vert f_{1}\left(  x\right)  \right\vert
^{q}dx\right)  ^{\frac{1}{q}}\left(  \int_{U}\left\vert f_{2}\left(  x\right)
\right\vert ^{q}dx\right)  ^{\frac{1}{q}}\left(  \int_{U}\left\vert
f_{3}\left(  x\right)  \right\vert ^{q}dx\right)  ^{\frac{1}{q}}=\left\Vert
f_{1}\right\Vert _{L^{q}}\left\Vert f_{2}\right\Vert _{L^{q}}\left\Vert
f_{3}\right\Vert _{L^{q}}\ .
\end{align*}
The implication $\left(  2\right)  \Longrightarrow\left(  3\right)  $ follows
from the embedding $\left\Vert f_{k}\right\Vert _{L^{q}\left(  U_{k}\right)
}\leq\left\Vert f_{k}\right\Vert _{L^{\infty}\left(  U_{k}\right)  }\left\vert
U_{k}\right\vert ^{\frac{1}{q}}\leq\left\Vert f_{k}\right\Vert _{L^{\infty
}\left(  U_{k}\right)  }$.

The next section of this paper is devoted to an adaptation of the argument of
Bourgain and Guth \cite[Section 2]{BoGu} that will show that $\mathcal{E}%
_{\mathop{\rm disj}\nu}\left(  \otimes L^{\infty}\rightarrow L^{\frac{q}{3}%
};\varepsilon\right)  $ for all $q>3$ and $\varepsilon>0$, implies the Fourier
extension conjecture $\mathcal{E}\left(  \otimes_{1}L^{q}\rightarrow
L^{\frac{q}{3}};\varepsilon\right)  $ for $q>3$, i.e. (\ref{FEC}) in
$\mathbb{R}^{3}$, thereby establishing the implication $\left(  3\right)
\Longrightarrow\left(  1\right)  $, and completing the proof of Theorem
\ref{main}. In the third section we obtain the equivalence of the Fourier
extension conjecture with an Alpert disjoint trilinear conjecture, and in the
final appendix section, we sketch an alternate proof of the
\emph{probabilistic} Fourier extension theorem in \cite{Saw7}.

\section{Proof that disjoint trilinear extension implies Fourier extension}

A natural approach to proving $\left(  3\right)  \Longrightarrow\left(
1\right)  $ is to write $f=\sum_{K\in\mathcal{G}_{\lambda}\left[  U\right]
}\mathbf{1}_{K}f$, where $\nu=2^{-\lambda}$ and the squares $K\in
\mathcal{G}_{\lambda}\left[  U\right]  $ tile $U$ and have side length
$2^{-\lambda}$. Then we have
\begin{align*}
\left\Vert \mathcal{E}f\right\Vert _{L^{q}\left(  B\left(  0,R\right)
\right)  }^{q}  &  =\left\Vert \left(  \sum_{K\in\mathcal{G}_{\lambda}\left[
U\right]  }\mathcal{E}\left(  \mathbf{1}_{K}f\right)  \right)  ^{3}\right\Vert
_{L^{\frac{q}{3}}\left(  B\left(  0,R\right)  \right)  }^{\frac{q}{3}%
}=\left\Vert \sum_{\left(  K_{1},K_{2},K_{3}\right)  \in\mathcal{G}_{\lambda
}\left[  U\right]  ^{3}}\mathcal{E}\left(  \mathbf{1}_{K_{1}}f\right)
\ \mathcal{E}\left(  \mathbf{1}_{K_{2}}f\right)  \ \mathcal{E}\left(
\mathbf{1}_{K_{3}}f\right)  \right\Vert _{L^{\frac{q}{3}}\left(  B\left(
0,R\right)  \right)  }^{\frac{q}{3}}\\
&  =\left\Vert \left\{  \sum_{\left(  K_{1},K_{2},K_{3}\right)  \in\Gamma_{1}%
}+\sum_{\left(  K_{1},K_{2},K_{3}\right)  \in\Gamma_{2}}+\sum_{\left(
K_{1},K_{2},K_{3}\right)  \in\Gamma_{3}}\right\}  \mathcal{E}\left(
\mathbf{1}_{K_{1}}f\right)  \ \mathcal{E}\left(  \mathbf{1}_{K_{2}}f\right)
\ \mathcal{E}\left(  \mathbf{1}_{K_{3}}f\right)  \right\Vert _{L^{\frac{q}{3}%
}\left(  B\left(  0,R\right)  \right)  }^{\frac{q}{3}}\\
&  \lesssim\sum_{\alpha=1}^{3}\left\Vert \sum_{\left(  K_{1},K_{2}%
,K_{3}\right)  \in\Gamma_{\alpha}}\mathcal{E}\left(  \mathbf{1}_{K_{1}%
}f\right)  \ \mathcal{E}\left(  \mathbf{1}_{K_{2}}f\right)  \ \mathcal{E}%
\left(  \mathbf{1}_{K_{3}}f\right)  \right\Vert _{L^{\frac{q}{3}}\left(
B\left(  0,R\right)  \right)  }^{\frac{q}{3}}\equiv\sum_{\alpha=1}%
^{3}T_{\alpha}\ ,
\end{align*}
where%
\begin{align*}
\Gamma_{1}  &  \equiv\left\{  \left(  K_{1},K_{2},K_{3}\right)  \in
\mathcal{G}_{\lambda}\left[  U\right]  ^{3}:\text{no pair of squares
touch}\right\}  ,\\
\Gamma_{2}  &  \equiv\left\{  \left(  K_{1},K_{2},K_{3}\right)  \in
\mathcal{G}_{\lambda}\left[  U\right]  ^{3}:\text{for exactly one pair of
squares touch}\right\}  ,\\
\Gamma_{3}  &  \equiv\left\{  \left(  K_{1},K_{2},K_{3}\right)  \in
\mathcal{G}_{\lambda}\left[  U\right]  ^{3}:\text{every square touches
another}\right\}  .
\end{align*}

Term $T_{1}$ can be controlled by $C_{\varepsilon,\nu,\infty,q}R^{\varepsilon
}$ using (\ref{concl thm}). Term $T_{3}$ can be controlled using parabolic
rescaling as in \textbf{Case 2} below,%

\begin{align*}
&  \left\Vert \sum_{\left(  K_{1},K_{2},K_{3}\right)  \in\Gamma_{3}%
}\mathcal{E}\left(  \mathbf{1}_{K_{1}}f\right)  \ \mathcal{E}\left(
\mathbf{1}_{K_{2}}f\right)  \ \mathcal{E}\left(  \mathbf{1}_{K_{3}}f\right)
\right\Vert _{L^{\frac{q}{3}}\left(  B\left(  0,R\right)  \right)  }^{\frac
{q}{3}}\lesssim\left\Vert \sum_{K\in\mathcal{G}_{\lambda}\left[  U\right]
}\mathcal{E}\left(  \mathbf{1}_{K}f\right)  ^{3}\right\Vert _{L^{\frac{q}{3}%
}\left(  B\left(  0,R\right)  \right)  }^{\frac{q}{3}}\\
&  \leq\left(  \#\mathcal{G}_{\lambda}\left[  U\right]  \right)  ^{\frac{q}%
{3}-1}\sum_{K\in\mathcal{G}_{\lambda}\left[  U\right]  }\int_{B\left(
0,R\right)  }\left\vert \mathcal{E}\left(  \mathbf{1}_{K}f\right)  \left(
\xi\right)  \right\vert ^{q}d\xi\lesssim\left(  \#\mathcal{G}_{\lambda}\left[
U\right]  \right)  ^{\frac{q}{3}-1}\sum_{K\in\mathcal{G}_{\lambda}\left[
U\right]  }\left(  2^{-\lambda}\right)  ^{\left(  2q-4\right)  }%
\sup_{\left\Vert f\right\Vert _{L^{\infty}}\leq1}\left\Vert \mathcal{E}%
f\right\Vert _{L^{q}\left(  B\left(  0,2^{-\lambda}R\right)  \right)  }^{q}\\
&  \leq\left(  \#\mathcal{G}_{\lambda}\left[  U\right]  \right)  ^{\frac{q}%
{3}}\left(  2^{-\lambda}\right)  ^{2q-4}\sup_{\left\Vert f\right\Vert
_{L^{\infty}}\leq1}\left\Vert \mathcal{E}f\right\Vert _{L^{q}\left(  B\left(
0,2^{-\lambda}R\right)  \right)  }^{q}=\left(  2^{-\lambda}\right)  ^{\frac
{4}{3}q-4}\sup_{\left\Vert f\right\Vert _{L^{\infty}}\leq1}\left\Vert
\mathcal{E}f\right\Vert _{L^{q}\left(  B\left(  0,2^{-\lambda}R\right)
\right)  }^{q},
\end{align*}
which is at most $\frac{1}{2}\sup_{\left\Vert f\right\Vert _{L^{\infty}}\leq
1}\left\Vert \mathcal{E}f\right\Vert _{L^{q}\left(  B\left(  0,R\right)
\right)  }^{q}$ if $\lambda$ is chosen sufficiently large depending on $q>3$.
Then we have%
\begin{align*}
\sup_{\left\Vert f\right\Vert _{L^{\infty}}\leq1}\left\Vert \mathcal{E}%
f\right\Vert _{L^{q}\left(  B\left(  0,R\right)  \right)  }^{q}  &  \leq
T_{1}+T_{2}+T_{3}\leq C_{\varepsilon,\nu,\infty,q}R^{\varepsilon}+T_{2}%
+\frac{1}{2}\sup_{\left\Vert f\right\Vert _{L^{\infty}}\leq1}\left\Vert
\mathcal{E}f\right\Vert _{L^{q}\left(  B\left(  0,R\right)  \right)  }^{q},\\
&  \Longrightarrow\sup_{\left\Vert f\right\Vert _{L^{\infty}}\leq1}\left\Vert
\mathcal{E}f\right\Vert _{L^{q}\left(  B\left(  0,R\right)  \right)  }^{q}\leq
C_{\varepsilon,\nu,\infty,q}R^{\varepsilon}+T_{2}\ ,
\end{align*}
but unfortunately, term $T_{2}$ is problematic since the same argument
produces a larger power $\left(  \#\mathcal{G}_{\lambda}\left[  U\right]
\right)  ^{\frac{q}{3}+1}$ due to summing over \emph{two} independent squares
in $\mathcal{G}_{\lambda}\left[  U\right]  $. The resulting estimate $\left(
2^{-\lambda}\right)  ^{\left(  2q-8\right)  }\sup_{\left\Vert f\right\Vert
_{L^{\infty}}\leq1}\left\Vert \mathcal{E}f\right\Vert _{L^{q}\left(  B\left(
0,R\right)  \right)  }^{q}$ cannot be absorbed unless $q>4$.

Here we will use the \emph{disjoint} trilinear estimate $\mathcal{E}%
_{\mathop{\rm disj}\nu}\left(  \otimes_{3}L^{\infty}\rightarrow L^{\frac{q}%
{3}};\varepsilon\right)  $ to essentially eliminate the difficult \textbf{Case
3} of the Bourgain and Guth argument in \cite[Section 2]{BoGu}, along with the
restriction $p>\frac{10}{3}$ there. This results in an optimal local linear
inequality, which in turn proves the Fourier extension conjecture in three
dimensions by Nikishin-Maurey-Pisier theory and $\varepsilon$-removal techniques.

Suppose $S$ is a compact smooth hypersurface contained in $\mathbb{R}^{3}$
that is contained in the paraboloid $\mathbb{P}^{2}$, and denote surface
measure on $S$ by $\sigma$. The next definition is specialized from
\cite{BoGu}.

\begin{definition}
For $1<q<\infty$ and $R>0$ define $Q_{R}^{\left(  q\right)  }$ to be the best
constant in the local linear Fourier extension inequality,%
\[
\left(  \int_{B\left(  0,R\right)  }\left\vert \widehat{\Phi_{\ast}f}\left(
\xi\right)  \right\vert ^{q}d\xi\right)  ^{\frac{1}{q}}\leq Q_{R}^{\left(
q\right)  }\left\Vert f\right\Vert _{L^{\infty}\left(  U\right)  }\ ,
\]
i.e.
\begin{equation}
Q_{R}^{\left(  q\right)  }\equiv\sup_{\left\Vert f\right\Vert _{L^{\infty
}\left(  U\right)  }\leq1}\left(  \int_{B\left(  0,R\right)  }\left\vert
\mathcal{E}f\left(  \xi\right)  \right\vert ^{q}d\xi\right)  ^{\frac{1}{q}%
}=\left\Vert \mathcal{E}\right\Vert _{L^{\infty}\left(  U\right)  \rightarrow
L^{q}\left(  B\left(  0,R\right)  \right)  }.\label{Q_R}%
\end{equation}

\end{definition}

\begin{theorem}
\label{Loc lin}Let $S$ be as above. Suppose that $q>3$ and $0<\nu\leq\frac
{1}{2}2^{10}2^{-\frac{3q}{q-3}}$. If $\mathcal{E}_{\mathop{\rm disj}\nu
}\left(  \otimes_{3}L^{\infty}\rightarrow L^{\frac{q}{3}};\varepsilon\right)
$ holds for all $\varepsilon>0$, then%
\[
Q_{R}^{\left(  q\right)  }\leq C_{\varepsilon,\nu,q}R^{\varepsilon
},\ \ \ \ \ \text{for all }\varepsilon>0\text{ and }R\geq1.
\]

\end{theorem}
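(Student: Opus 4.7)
The plan is to adapt the induction-on-scales scheme of Bourgain and Guth \cite{BoGu}, with the crucial simplification that their transversal trilinear input is replaced throughout by the weaker disjoint trilinear hypothesis $\mathcal{E}_{\limfunc{disj}\nu}(\otimes _{3}L^{\infty }\to L^{q/3};\varepsilon )$. Fix $q>3$ and $\varepsilon >0$, and induct on the scale $R$: assuming $Q_{r}^{(q)}\le C_{q}r^{\varepsilon }$ for all $r\le R/2$, deduce the same bound at scale $R$. The base case $R=O(1)$ is trivial.

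\textbf{Setup and pigeonholing.} Fix $\nu =2^{-\lambda }$ at the quantitative threshold allowed by the hypothesis, decompose $U=\bigsqcup_{K}K$ into dyadic squares of side $\nu $, so that $\mathcal{E}f=\sum_{K}\mathcal{E}(\mathbf{1}_{K}f)$, and cover $B(0,R)$ by balls $B_{\alpha }$ of radius $\nu ^{-2}$, the Heisenberg scale dual to caps of side $\nu $. On each $B_{\alpha }$ the wave packet associated with a single cap $K$ is essentially constant, so a local pigeonhole isolates a family $\mathcal{K}_{\alpha }$ of ``significant'' caps for which, up to a harmless error, $|\mathcal{E}f|\lesssim \sum_{K\in \mathcal{K}_{\alpha }}|\mathcal{E}(\mathbf{1}_{K}f)|$ on $B_{\alpha }$.

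\textbf{Dichotomy replacing Bourgain--Guth's trichotomy.} On each $B_{\alpha }$, split the analysis into two cases according to the geometry of $\mathcal{K}_{\alpha }$. In the \emph{concentrated} case $\mathcal{K}_{\alpha }$ is contained in a single square of side $O(\nu )$; parabolic rescaling then converts the corresponding contribution to a full extension at the smaller scale $R^{\prime }=\nu ^{2}R\leq R/2$, and the inductive hypothesis produces a bound $C_{q}(R^{\prime })^{\varepsilon }$, saving a factor $\nu ^{2\varepsilon }$ compared to the target at scale $R$. In the \emph{spread} case one extracts three caps $K_{1},K_{2},K_{3}\in \mathcal{K}_{\alpha }$ whose images on $\mathbb{P}^{2}$ have pairwise distance comparable to their common diameter, i.e.\ a $\nu $-disjoint triple in the sense of (\ref{weak sep}); the hypothesis $\mathcal{E}_{\limfunc{disj}\nu }(\otimes _{3}L^{\infty }\to L^{q/3};\varepsilon )$ then applies directly on $B_{\alpha }$, costing only an $R^{\varepsilon }$-factor. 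This is precisely where the disjoint hypothesis defeats the obstruction in \cite{BoGu}: in their argument the same extraction required transversality, so collinear-but-well-separated configurations formed Case 3 and forced the restriction $p>\tfrac{10}{3}$; under mere $\nu $-disjointness such configurations are already admissible, and no Case 3 is required.

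\textbf{Closing the induction.} Summing over $\alpha $, bounding $\#\mathcal{K}_{\alpha }\le \nu ^{-2}$, and balancing against the induction hypothesis yields a self-improving inequality of the schematic form
\begin{equation*}
\sup_{\Vert f\Vert _{L^{\infty }}\le 1}\Vert \mathcal{E}f\Vert _{L^{q}(B(0,R))}^{q}\;\le \;C_{\varepsilon ,\nu ,q}R^{\varepsilon }\;+\;\tfrac{1}{2}\sup_{\Vert f\Vert _{L^{\infty }}\le 1}\Vert \mathcal{E}f\Vert _{L^{q}(B(0,R))}^{q},
\end{equation*}
where the $\tfrac{1}{2}$ is produced by the parabolic-rescaling gain in the concentrated case, exactly as in the computation for $T_{3}$ in the preceding discussion; this is what forces the quantitative threshold $\nu \le \tfrac{1}{2}2^{10}2^{-3q/(q-3)}$ on $\nu $. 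Absorbing the last term yields $Q_{R}^{(q)}\le C_{q}R^{\varepsilon }$ at scale $R$, closing the induction. The main obstacle I expect is the careful bookkeeping in the dichotomy: one must verify that the combinatorial loss $\#\mathcal{K}_{\alpha }\le \nu ^{-2}$, the pigeonhole thresholds, and the $\nu $-dependent trilinear constant all combine with the $\nu ^{2\varepsilon }$ inductive gain into a genuine contraction at the stated threshold for every $q>3$, and one must carry out the familiar ``$\varepsilon $-trick'' of Bourgain--Guth to extract the inductive hypothesis at arbitrarily small $\varepsilon >0$.
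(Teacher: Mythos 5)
Your overall skeleton (cap decomposition at a fixed small scale $\nu$, local pigeonholing of significant caps, trilinear input on well-separated configurations, parabolic rescaling plus absorption/induction on the remaining configurations) is the same as the paper's adaptation of Bourgain--Guth, and the absorption mechanism you describe is essentially the one used there. But there is a genuine gap in your central claim that a \emph{dichotomy} suffices and that ``no Case 3 is required.'' The negation of your concentrated case does not let you extract a $\nu$-disjoint triple: the obstruction is the \emph{dipole} configuration, in which the significant caps lie in exactly two small clusters far apart (in the extreme case there are only two significant caps at all). Such a family is not contained in a single square of side $O(\nu)$, yet any three significant caps must include two from the same cluster, so no triple with the pairwise separations required by (\ref{weak sep}) exists --- this has nothing to do with transversality versus disjointness. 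What the disjoint hypothesis removes is the \emph{collinear three-cluster} obstruction of \cite[Section 2]{BoGu} (three mutually separated clusters lying near a line are admissible for (\ref{concl thm}) though not $\nu$-transverse, which is what forced $p>\frac{10}{3}$ there); it does not remove the two-cluster case. The paper therefore keeps a trichotomy: its \textbf{Case 3} (``dipole interaction'') treats exactly this configuration, and it is handled not by the trilinear hypothesis but by applying the parabolic rescaling of Case 2 separately to each of the two $O(\nu)$-clusters, contributing a term of the form $\left( 2^{-\beta \lambda }\right) ^{2-\frac{4}{q}}Q_{2^{-\beta \lambda }R}$ which is then absorbed. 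Without this case your induction does not close.

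Two smaller points. First, even in the genuinely spread case, the caps themselves (side $\nu$) do not in general satisfy $\limfunc{diam}\left[ \Phi \left( U_{k}\right) \right] \approx \limfunc{dist}\left[ \Phi \left( U_{k}\right) ,\bigcup_{j\neq k}\Phi \left( U_{j}\right) \right]$ when their mutual separations exceed $O(\nu )$; the paper fixes this by enlarging each cap to a cube whose side length is comparable to its distance to the other two before invoking $\mathcal{E}_{\limfunc{disj}\nu }$, and you need that step too. Second, a single-cap extension $\mathcal{E}\left( \mathbf{1}_{K}f\right) $ is essentially constant only at scale $\nu ^{-1}$ isotropically (its Fourier support is a $\nu \times \nu \times \nu ^{2}$ box), so the local pigeonholing should be run on balls of radius $\nu ^{-1}$ (the paper's $B\left( a,2^{\lambda }\right) $, via the weights $w_{I}^{a}$), not on balls of radius $\nu ^{-2}$; the latter are only dual to the caps in the slab sense, not the ball sense. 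The paper also uses two scales $\lambda ^{\prime }\ll \lambda $ in its clustered case, which is part of the bookkeeping needed to make the error terms absorbable; your single-scale version would have to recover this in the concentrated case.
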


Using Theorem \ref{Loc lin} together with $\varepsilon$-removal techniques and
factorization theory, we can now prove that the Fourier extension conjecture
holds in three dimensions if for every $\varepsilon>0$ there is $\nu>0$ such
that $\mathcal{E}_{\mathop{\rm disj}\nu}\left(  \otimes_{3}L^{\infty
}\rightarrow L^{\frac{q}{3}};\varepsilon\right)  $ holds for all
$\varepsilon>0$.

\begin{proof}
[Proof of the implication $\left(  3\right)  \Longrightarrow\left(  1\right)
$ in Theorem \ref{main}]Statement $\left(  3\right)  $ of Theorem \ref{main}
implies the conclusion of Theorem \ref{Loc lin} for all $q>3$, which says that
the extension operator $\mathcal{E}$ maps $L^{\infty}\left(  \sigma\right)  $
to $L^{q}\left(  B\left(  0,R\right)  \right)  $ with bound $CR^{\varepsilon}$
for all $q>3$ and $\varepsilon>0$, i.e.%
\[
\left\Vert \mathcal{E}f\right\Vert _{L^{q}\left(  B\left(  0,R\right)
\right)  }\lesssim R^{\varepsilon}\left\Vert f\right\Vert _{L^{\infty}\left(
\sigma\right)  },\ \ \ \ \ \text{for all }q>3\text{ and }\varepsilon>0.
\]
By duality, this is equivalent to the restriction inequality,%
\[
\left\Vert \widehat{f}\mid_{\mathbb{P}^{2}}\right\Vert _{L^{1}\left(
\sigma\right)  }\lesssim R^{\varepsilon}\left\Vert f\right\Vert _{L^{q^{\prime
}}\left(  B\left(  0,R\right)  \right)  },\ \ \ \ \ \text{for all }q^{\prime
}<\frac{3}{2}\text{ and }\varepsilon>0.
\]
An immediate consequence of the variant \cite[Lemma A1]{BoGu} of Tao's
$\varepsilon$-removal theorem \cite[Theorem 1.2]{Tao}, is that this inequality
implies the global restriction inequality,%
\[
\left\Vert \widehat{f}\mid_{\mathbb{P}^{2}}\right\Vert _{L^{1}\left(
\sigma\right)  }\lesssim\left\Vert f\right\Vert _{L^{q^{\prime}}\left(
\mathbb{R}^{3}\right)  },\ \ \ \ \ \text{for all }q^{\prime}<\frac{3}{2},
\]
which by duality is the global extension inequality,%
\[
\left\Vert \mathcal{E}f\right\Vert _{L^{q}\left(  \mathbb{R}^{3}\right)
}\lesssim\left\Vert f\right\Vert _{L^{\infty}\left(  \sigma\right)
},\ \ \ \ \ \text{for all }q>3.
\]

In order to extend the domain $L^{\infty}\left(  \sigma\right)  $ of
$\mathcal{E}$\ to the larger space $L^{q}\left(  \sigma\right)  $, we appeal
to Nikishin-Maurey-Pisier factorization theory and interpolation. For example,
from \cite[Corollary 1.4 and Remark 1.5]{Bus} and Theorem \ref{Loc lin}, we
conclude that,%
\[
\left\Vert \mathcal{E}g\right\Vert _{L^{q}\left(  \mathbb{R}^{3}\right)
}=\left\Vert \widehat{gd\sigma}\right\Vert _{L^{q}\left(  \mathbb{R}%
^{3}\right)  }\leq C_{q,\gamma}\left\Vert g\right\Vert _{L^{q}\left(
\sigma\right)  },\ \ \ \ \ \text{for all }q>3.
\]
This completes the proof of Theorem \ref{main} assuming that Theorem
\ref{Loc lin} holds.
\end{proof}

Now we turn to proving the local linear Fourier inequality in Theorem
\ref{Loc lin} via part of the argument of Bourgain and Guth \cite[Section
2]{BoGu}, but using the $\nu$-disjoint assumption in (\ref{concl thm}) to
simplify the problematic \textbf{Case 3} of their argument.

\subsection{The pigeonholing argument of Bourgain and Guth\label{Sub pigeon}}

\begin{proof}
[Proof of Theorem \ref{Loc lin}]We begin the argument exactly as in
\cite{BoGu}, but with some changes in notation. We let the surface $S$ be a
compact smooth piece of the paraboloid $\mathbb{P}^{2}$ given by
$z_{3}=\left\vert z^{\prime}\right\vert ^{2}=z_{1}^{2}+z_{2}^{2}$ in
$\mathbb{R}^{3}$, and for $f\in L^{\infty}\left(  S\right)  $ with $\left\Vert
f\right\Vert _{L^{\infty}\left(  S\right)  }=1$, we consider the oscillatory
integral $\mathcal{E}f\left(  \xi\right)  $, which we write as%
\begin{align*}
Tf\left(  \xi\right)   &  \equiv\int_{U}e^{i\phi\left(  \xi,y\right)
}f\left(  y\right)  dy=\int_{U}e^{i\left\{  \xi_{1}\cdot y_{1}+\xi_{2}\cdot
y_{2}+\xi_{3}\left(  y_{1}^{2}+y_{2}^{2}\right)  \right\}  }f\left(  y\right)
dy\\
&  =\int_{U}e^{i\xi\cdot\left(  y,\left\vert y\right\vert ^{2}\right)
}f\left(  y\right)  dy=\widehat{f^{\Phi}}\left(  \xi\right)  =\widehat
{\Phi_{\ast}\left[  f\left(  y\right)  dy\right]  }\left(  \xi\right)
,\ \ \ \ \ \text{for }\xi\in\mathbb{R}^{3},
\end{align*}
where
\[
\phi\left(  \xi,y\right)  =\xi\cdot\Phi\left(  y\right)  \text{ and }%
\Phi\left(  y\right)  \equiv\left(  y_{1},y_{2},y_{1}^{2}+y_{2}^{2}\right)  .
\]
For $\lambda\geq1$, let $f=\sum_{I\in\mathcal{G}_{\lambda}\left[  U\right]
}\boldsymbol{1}_{I}f=\sum_{I\in\mathcal{G}_{\lambda}\left[  U\right]  }f_{I}$
and write%
\[
Tf\left(  \xi\right)  =\sum_{I\in\mathcal{G}_{\lambda}\left[  U\right]  }%
\int_{S}e^{i\xi\cdot\left(  y,\left\vert y\right\vert ^{2}\right)  }%
f_{I}\left(  y\right)  dy=\sum_{I\in\mathcal{G}_{\lambda}\left[  U\right]
}e^{i\phi\left(  \xi,c_{I}\right)  }\int e^{i\left\{  \phi\left(
\xi,y\right)  -\phi\left(  \xi,c_{I}\right)  \right\}  }f_{I}\left(  y\right)
dy=\sum_{I\in\mathcal{G}_{\lambda}\left[  S\right]  }e^{i\phi\left(  \xi
,c_{I}\right)  }T_{I}f\left(  \xi\right)  ,
\]
where
\begin{align}
T_{I}f\left(  \xi\right)   &  \equiv\int e^{i\left\{  \phi\left(
\xi,y\right)  -\phi\left(  \xi,c_{I}\right)  \right\}  }f_{I}\left(  y\right)
dy=e^{-i\left\{  \xi\cdot\Phi\left(  c_{I}\right)  \right\}  }\int
e^{i\xi\cdot\Phi\left(  y\right)  }f_{I}\left(  y\right)  dy\label{def T_I}\\
&  =e^{-i\left\{  \xi\cdot\Phi\left(  c_{I}\right)  \right\}  }\widehat
{f_{I}^{\Phi}}\left(  \xi\right)  =\widehat{\tau_{-\Phi\left(  c_{I}\right)
}f_{I}^{\Phi}}\left(  \xi\right)  ,\nonumber
\end{align}
where $\tau_{\Phi\left(  c_{I}\right)  }g\left(  z\right)  \equiv g\left(
z-\Phi\left(  c_{I}\right)  \right)  $ is translation of a function $g$ by the
vector $\Phi\left(  c_{I}\right)  $.

Note that%
\[
\left\vert \nabla_{\xi}\left\{  \xi\cdot\left(  y-c_{I},\left\vert
y\right\vert ^{2}-\left\vert c_{I}\right\vert ^{2}\right)  \right\}
\right\vert =\left\vert \left(  y-c_{I},\left\vert y\right\vert ^{2}%
-\left\vert c_{I}\right\vert ^{2}\right)  \right\vert \lesssim\frac
{1}{2^{\lambda}},\ \ \ \ \ \text{for }y\in I,
\]
implies%
\begin{align}
\nabla_{\xi}T_{I}f\left(  \xi\right)   &  =\nabla_{\xi}\int e^{i\xi
\cdot\left(  y-c_{I},\left\vert y-c_{I}\right\vert ^{2}\right)  }f_{I}\left(
y\right)  dy=\int\nabla_{\xi}e^{i\xi\cdot\left(  y-c_{I},\left\vert
y-c_{I}\right\vert ^{2}\right)  }f_{I}\left(  y\right)
dy\label{pre note that}\\
&  =\int ie^{i\xi\cdot\left(  y-c_{I},\left\vert y-c_{I}\right\vert
^{2}\right)  }\nabla_{\xi}\left\{  \xi\cdot\left(  y-c_{I},\left\vert
y\right\vert ^{2}-\left\vert c_{I}\right\vert ^{2}\right)  \right\}
f_{I}\left(  y\right)  dy,\nonumber
\end{align}
which implies%
\begin{equation}
\left\vert \nabla_{\xi}T_{I}f\left(  \xi\right)  \right\vert \leq
\int\left\vert \nabla_{\xi}\left\{  \xi\cdot\left(  y-c_{I},\left\vert
y\right\vert ^{2}-\left\vert c_{I}\right\vert ^{2}\right)  \right\}
\right\vert \left\vert f_{I}\left(  y\right)  \right\vert dy\lesssim\frac
{1}{2^{\lambda}}\left\Vert f_{I}\right\Vert _{L^{1}\left(  U\right)  }%
\lesssim\frac{1}{2^{3\lambda}}\left\Vert f_{I}\right\Vert _{L^{\infty}\left(
U\right)  },\label{note that}%
\end{equation}
since $\ell\left(  I\right)  =\frac{1}{2^{\lambda}}$. We will use the
estimates (\ref{pre note that}) and (\ref{note that}) in (\ref{exclaim'}) below.

Now let $\rho$ be a smooth rapidly decreasing bump function such that
$\widehat{\rho}\left(  \xi\right)  =1$ for $\left\vert \xi\right\vert \leq1$,
and set%
\[
\rho_{\lambda}\left(  z\right)  \equiv\frac{1}{2^{3\lambda}}\rho\left(
\frac{z}{2^{\lambda}}\right)  ,\ \ \ \ \ \widehat{\rho_{\lambda}}\left(
\xi\right)  =\widehat{\rho}\left(  2^{\lambda}\xi\right)  =1\text{ on
}B\left(  0,2^{-\lambda}\right)  \text{ and }\rho_{\lambda}\left(  z\right)
\approx\frac{1}{2^{3\lambda}}\text{ on }B\left(  0,2^{\lambda}\right)  .
\]
Then from (\ref{def T_I}) we obtain%
\[
T_{I}f\left(  \xi\right)  =T_{I}f\ast\rho_{\lambda}\left(  \xi\right)
\ ,\ \ \ \ \ \text{for }I\in\mathcal{G}_{\lambda}\left[  S\right]  \text{ and
}\xi\in\mathbb{R}^{3},
\]
since $\tau_{-\Phi\left(  c_{I}\right)  }f_{I}^{\Phi}\subset B\left(
0,2^{-\lambda}\right)  $ and $\widehat{T_{I}f}\left(  z\right)  =\tau
_{-\Phi\left(  c_{I}\right)  }f_{I}^{\Phi}\left(  z\right)  $ imply%
\[
\widehat{T_{I}f\ast\rho_{\lambda}}\left(  z\right)  =\widehat{T_{I}f}\left(
z\right)  \widehat{\rho_{\lambda}}\left(  z\right)  =\tau_{-\Phi\left(
c_{I}\right)  }f_{I}^{\Phi}\left(  z\right)  \widehat{\rho_{\lambda}}\left(
z\right)  =\tau_{-\Phi\left(  c_{I}\right)  }f_{I}^{\Phi}\left(  z\right)
=\widehat{T_{I}f}\left(  z\right)  .
\]

Now fix a point $a\in\Gamma_{\lambda}\left(  R\right)  $, where%
\[
\Gamma_{\lambda}\left(  R\right)  \equiv2^{\lambda}\mathbb{Z}^{3}\cap
B_{R},\ \ \ \ \ \text{and }B_{R}\equiv B\left(  0,R\right)  ,
\]
and restrict $\xi$ to the ball $B\left(  a,2^{\lambda}\right)  $. We will
write $B_{R}$ as apposed to $B\left(  0,R\right)  $ above in order to
emphasize the nature of the different roles played by $\lambda$ and $R$,
namely $R\nearrow\infty$ while $\lambda$ remains a fixed sufficiently large
integer to be chosen.

Then for $\xi\in B\left(  a,2^{\lambda}\right)  $ and $I\in\mathcal{G}%
_{\lambda}\left[  S\right]  $ we have
\begin{align}
&  \left\vert T_{I}f\left(  \xi\right)  \right\vert =\left\vert T_{I}f\ast
\rho_{\lambda}\left(  \xi\right)  \right\vert =\left\vert \int_{\mathbb{R}%
^{3}}T_{I}f\left(  z\right)  \rho_{\lambda}\left(  \xi-z\right)  dz\right\vert
\label{exclaim}\\
&  \leq\int_{\mathbb{R}^{3}}\left\vert T_{I}f\left(  z\right)  \right\vert
\left\vert \rho_{\lambda}\left(  \xi-z\right)  \right\vert dz\leq
\int_{\mathbb{R}^{3}}\left\vert T_{I}f\left(  z\right)  \right\vert
\sup_{\omega\in B\left(  a,2^{\lambda}\right)  }\left\vert \rho_{\lambda
}\left(  z-\omega\right)  \right\vert dz=\int_{\mathbb{R}^{3}}\left\vert
T_{I}f\left(  z\right)  \right\vert \zeta_{\lambda}\left(  z-a\right)
dz,\nonumber
\end{align}
where $\zeta_{\lambda}\left(  w\right)  \equiv\sup_{\omega-a\in B\left(
0,2^{\lambda}\right)  }\left\vert \rho_{\lambda}\left(  w-a-\omega\right)
\right\vert $, since $\rho$ can be chosen radial and,
\begin{align*}
\sup_{\omega\in B\left(  a,2^{\lambda}\right)  }\left\vert \rho_{\lambda
}\left(  \omega-z\right)  \right\vert  &  =\frac{1}{2^{3\lambda}}\sup
_{\omega\in B\left(  a,2^{\lambda}\right)  }\left\vert \rho\left(
\frac{\left(  \omega-a\right)  -\left(  z-a\right)  }{2^{\lambda}}\right)
\right\vert =\frac{1}{2^{3\lambda}}\sup_{\gamma\in B\left(  0,1\right)
}\left\vert \rho\left(  \frac{z-a}{2^{\lambda}}-\gamma\right)  \right\vert
=\zeta_{\lambda}\left(  z-a\right)  ,\\
\text{where }\zeta\left(  w\right)   &  \equiv\sup_{\left\vert w-w^{\prime
}\right\vert \leq1}\left\vert \rho\left(  w^{\prime}\right)  \right\vert .
\end{align*}
Now for $I\in\mathcal{G}_{\lambda}\left[  S\right]  $ define the right hand
side of (\ref{exclaim}) to be
\begin{align*}
&  w_{I}^{a}\left(  f\right)  \equiv\int_{\mathbb{R}^{3}}\left\vert
T_{I}f\left(  z\right)  \right\vert \zeta_{\lambda}\left(  z-a\right)
dz=\int_{\mathbb{R}^{3}}\left\vert T_{I}f\left(  z\right)  \right\vert
\zeta\left(  \frac{z-a}{2^{\lambda}}\right)  \frac{dz}{2^{3\lambda}}\\
&  =\int_{\mathbb{R}^{3}}\left\vert \widehat{f_{I}^{\Phi}}\left(  z\right)
\right\vert \zeta\left(  \frac{z-a}{2^{\lambda}}\right)  \frac{dz}%
{2^{3\lambda}}\approx\frac{1}{\left\vert B\left(  a,2^{\lambda}\right)
\right\vert }\int_{B\left(  a,2^{\lambda}\right)  }\left\vert \widehat
{f_{I}^{\Phi}}\left(  z\right)  \right\vert \ ,
\end{align*}
and refer to $w_{I}^{a}\left(  f\right)  $ as the `weight' of $\widehat
{f_{I}^{\Phi}}$ relative to the ball $B\left(  a,2^{\lambda}\right)  $, which
represents that portion of the integral of $\left\vert \widehat{f_{I}^{\Phi}%
}\left(  z\right)  \right\vert $ that is taken over the ball $B\left(
a,2^{\lambda}\right)  $. Note that $w_{I}^{a}\left(  f\right)  \lesssim
\left\Vert \widehat{f_{I}^{\Phi}}\right\Vert _{L^{\infty}}\lesssim\left\Vert
\mathbf{1}_{I}f\right\Vert _{L^{1}}\leq\left\vert I\right\vert =2^{-2\lambda}$.

Summarizing, we have%
\begin{equation}
\left\vert T_{I}f\left(  \xi\right)  \right\vert \leq\int_{\mathbb{R}^{3}%
}\left\vert T_{I}f\left(  z\right)  \right\vert \zeta_{\lambda}\left(
z-a\right)  dz=w_{I}^{a}\left(  f\right)  \ ,\ \ \ \ \ \text{for }\xi\in
B\left(  a,2^{\lambda}\right)  .\label{pre exclaim'}%
\end{equation}
and
\begin{equation}
\int_{\mathbb{R}^{3}}\left\vert T_{I}f\left(  z\right)  \right\vert
\zeta_{\lambda}\left(  z-\xi\right)  dz\approx w_{I}^{a}\left(  f\right)
\text{ },\ \ \ \ \ \text{for }\xi\in B\left(  a,2^{\lambda}\right)
,\label{exclaim'}%
\end{equation}
since for $\xi\in B\left(  a,2^{\lambda}\right)  $ we have $\zeta_{\lambda
}\left(  \xi-z\right)  \approx\zeta_{\lambda}\left(  a-z\right)
=\zeta_{\lambda}\left(  z-a\right)  $ by (\ref{pre note that}) and
(\ref{note that}).

Now set%
\[
w_{\ast}^{a}\left(  f\right)  \equiv\max_{I\in\mathcal{G}_{\lambda}\left[
S\right]  }w_{I}^{a}\left(  f\right)  =\max_{I\in\mathcal{G}_{\lambda}\left[
S\right]  }\int_{\mathbb{R}^{3}}\left\vert T_{I}f\left(  z\right)  \right\vert
\zeta_{\lambda}\left(  z-a\right)  dz,
\]
and fix $I_{\ast}^{a}$ such that
\[
w_{I_{\ast}^{a}}^{a}=w_{\ast}^{a}.
\]
For $1\ll\lambda^{\prime}\ll\lambda$, and $\alpha,\beta,\gamma,\delta
\in\mathbb{N}$ chosen appropriately, we will estimate the contributions to the
norm $\left\Vert Tf\right\Vert _{L^{q}\left(  B_{R}\right)  }$ in three
exhaustive cases in turn. The first case will yield the growth factor
$R^{\varepsilon}$, while the next two cases will be absorbed. In fact, we show
at the end of the proof that we may take%
\[
\beta=1\text{, \ \ \ \ }\alpha=\delta=\gamma=2\text{, \ \ \ \ }\lambda
=2\lambda^{\prime}\text{, \ \ \ \ and }\lambda^{\prime}>\frac{3}{2}\frac
{q}{q-3}.
\]

\end{proof}

\subsection{Case 1: Separated interaction\label{Sub Case 1}}

\begin{proof}
[Proof continued]In \textbf{Case 1} we assume the following property. There
exists a triple of squares $I_{0}^{a},J_{0}^{a},K_{0}^{a}\in\mathcal{G}%
_{\lambda}\left[  U\right]  $ such that%
\begin{align*}
w_{I_{0}^{a}},w_{J_{0}^{a}},w_{K_{0}^{a}}  &  >2^{-\alpha\lambda}w_{\ast}%
^{a}\ ,\ \ \ \ \ \text{and }\left\vert \mathbf{c}_{I_{0}^{a}}-\mathbf{c}%
_{J_{0}^{a}}\right\vert ,\left\vert \mathbf{c}_{J_{0}^{a}}-\mathbf{c}%
_{K_{0}^{a}}\right\vert ,\left\vert \mathbf{c}_{K_{0}^{a}}-\mathbf{c}%
_{I_{0}^{a}}\right\vert >2^{10}2^{-\beta\lambda}\ ,\\
&  \fbox{$%
\begin{array}
[c]{ccccc}%
\mathbf{c}_{I_{0}^{a}} &  &  &  & \\
& \cdot &  &  & \\
&  & \mathbf{c}_{K_{0}^{a}} & \leftrightarrows & \mathbf{c}_{K_{0}^{a}}\\
&  &  & \cdot & \\
&  &  &  & \mathbf{c}_{J_{0}^{a}}%
\end{array}
$},
\end{align*}
i.e. there exists a `$2^{10}2^{-\beta\lambda}$-separated' triple $I_{0}%
^{a},J_{0}^{a},K_{0}^{a}$ of squares of side length $2^{-\lambda}$, such that
each of $I_{0}^{a}$, $J_{0}^{a}\ $and $K_{0}^{a}$ have near maximal weight. In
\textbf{Case 1} we will use the $\nu$-disjoint trilinear estimate in Theorem
\ref{main} with $\nu=2^{10}2^{-\beta\lambda}$, and $U_{I_{0}^{a}}$ equal to a
cube of side length $\mathop{\rm dist}\left(  I_{0}^{a},J_{0}^{a}\cup
K_{0}^{a}\right)  $, and similarly for $U_{J_{0}^{a}}$ and $U_{K_{0}^{a}}$.
For $\xi\in B\left(  a,2^{\lambda}\right)  $ we throw away the unimodular
function $e^{-i\Phi\left(  c_{I}\right)  \cdot\xi}$, and using
(\ref{pre exclaim'}), we estimate that for $\xi\in B\left(  a,2^{\lambda
}\right)  $,%
\begin{equation}
\left\vert Tf\left(  \xi\right)  \right\vert =\left\vert \sum_{L\in
\mathcal{G}_{\lambda}\left[  S\right]  }e^{i\phi\left(  \xi,c_{I}\right)
}T_{L}f\left(  \xi\right)  \right\vert \leq\sum_{L\in\mathcal{G}_{\lambda
}\left[  S\right]  }\left\vert T_{L}f\left(  \xi\right)  \right\vert
\lesssim\sum_{L\in\mathcal{G}_{\lambda}\left[  S\right]  }w_{L}^{a}%
<2^{2\lambda}w_{\ast}^{a}<2^{\left(  2+\alpha\right)  \lambda}\left(
w_{I_{0}^{a}}w_{J_{0}^{a}}w_{K_{0}^{a}}\right)  ^{\frac{1}{3}}%
,\label{T weight}%
\end{equation}
since the\emph{\ fixed} triple $\left(  I_{0}^{a},J_{0}^{a},K_{0}^{a}\right)
$ satisfies the near maximal weight condition in \textbf{Case 1}:%
\[
w_{\ast}^{a}<\min\left\{  2^{\alpha\lambda}w_{I_{0}^{a}}^{a},2^{\alpha\lambda
}w_{J_{0}^{a}}^{a},2^{\alpha\lambda}w_{K_{0}^{a}}^{a}\right\}  \leq
2^{\alpha\lambda}\left(  w_{I_{0}^{a}}^{a}\right)  ^{\frac{1}{3}}\left(
w_{J_{0}^{a}}^{a}\right)  ^{\frac{1}{3}}\left(  w_{K_{0}^{a}}^{a}\right)
^{\frac{1}{3}}.
\]
Let $\nu=2^{10}2^{-\beta\lambda}$. Then for $q>3$ and $\xi\in B\left(
a,2^{\lambda}\right)  $, we have from boundedness of $Tf$, and (\ref{T weight}%
) and (\ref{exclaim'}), followed by H\"{o}lder's inequality, that%
\begin{align*}
&  \left\vert Tf\left(  \xi\right)  \right\vert ^{q}\lesssim2^{q\left(
2+\alpha\right)  \lambda}\left(  w_{I_{0}^{a}}^{a}w_{J_{0}^{a}}^{a}%
w_{K_{0}^{a}}^{a}\right)  ^{\frac{q}{3}}\\
&  \approx2^{q\left(  2+\alpha\right)  \lambda}\left(  \int_{\mathbb{R}^{3}%
}\left\vert T_{I_{0}^{a}}f\left(  z_{1}\right)  \right\vert \zeta_{\lambda
}\left(  z_{1}-a\right)  dz_{1}\right)  ^{\frac{q}{3}}\left(  \int
_{\mathbb{R}^{3}}\left\vert T_{J_{0}^{a}}f\left(  z_{2}\right)  \right\vert
\zeta_{\lambda}\left(  z_{2}-a\right)  dz_{2}\right)  ^{\frac{q}{3}}\left(
\int_{\mathbb{R}^{3}}\left\vert T_{K_{0}^{a}}f\left(  z_{3}\right)
\right\vert \zeta_{\lambda}\left(  z_{3}-a\right)  dz_{3}\right)  ^{\frac
{q}{3}}\\
&  \lesssim2^{q\left(  2+\alpha\right)  \lambda}\int_{\mathbb{R}^{3}}%
\int_{\mathbb{R}^{3}}\int_{\mathbb{R}^{3}}\left\vert T_{I_{0}^{a}}f\left(
\xi-z_{1}\right)  T_{J_{0}^{a}}f\left(  \xi-z_{2}\right)  T_{K_{0}^{a}%
}f\left(  \xi-z_{3}\right)  \right\vert ^{\frac{q}{3}}\zeta_{\lambda}\left(
z_{1}\right)  \zeta_{\lambda}\left(  z_{2}\right)  \zeta_{\lambda}\left(
z_{3}\right)  dz_{1}dz_{2}dz_{3}\\
&  \lesssim2^{q\left(  2+\alpha\right)  \lambda}\sum_{\left(  I,J,K\right)
\in\mathcal{G}_{\lambda}^{\nu-\mathop{\rm separated}}\left[  U\right]  }%
\int_{\mathbb{R}^{9}}\left\vert T_{I}f\left(  \xi-z_{1}\right)  T_{J}f\left(
\xi-z_{2}\right)  T_{K}f\left(  \xi-z_{3}\right)  \right\vert ^{\frac{q}{3}%
}d\mu_{\lambda}\left(  z_{1},z_{2},z_{3}\right)  \ ,
\end{align*}
where
\[
\mathcal{G}_{\lambda}^{\nu-\mathop{\rm separated}}\left[  U\right]
\equiv\left\{  \left(  I,J,K\right)  \in\mathcal{G}_{\lambda}:\left(
I,J,K\right)  \text{ is }\nu\text{-separated as in (\ref{weak sep})}\right\}
,
\]
and%
\[
d\mu_{\lambda}\left(  z_{1},z_{2},z_{3}\right)  \equiv\zeta_{\lambda}\left(
z_{1}\right)  \zeta_{\lambda}\left(  z_{2}\right)  \zeta_{\lambda}\left(
z_{3}\right)  dz_{1}dz_{2}dz_{3}%
\]
is a bounded multiple of a probability measure.

Then we have%
\begin{align*}
&  \int_{B\left(  a,2^{\lambda}\right)  }\left\vert Tf\left(  \xi\right)
\right\vert ^{q}d\xi\\
&  \lesssim2^{q\left(  2+\alpha\right)  \lambda}\int_{B\left(  a,2^{\lambda
}\right)  }\sum_{\left(  I,J,K\right)  \in\mathcal{G}_{\lambda}^{\nu
-\mathop{\rm separated}}\left[  U\right]  }\int_{\mathbb{R}^{9}}\left\vert
T_{I}f\left(  \xi-z_{1}\right)  T_{J}f\left(  \xi-z_{2}\right)  T_{K}f\left(
\xi-z_{3}\right)  \right\vert ^{\frac{q}{3}}d\mu_{\lambda}\left(  z_{1}%
,z_{2},z_{3}\right)  d\xi\\
&  =2^{q\left(  2+\alpha\right)  \lambda}\sum_{\left(  I,J,K\right)
\in\mathcal{G}_{\lambda}^{\nu-\mathop{\rm separated}}\left[  U\right]  }%
\int_{\mathbb{R}^{9}}\left\{  \int_{B\left(  a,2^{\lambda}\right)  }\left\vert
T_{I}f\left(  \xi-z_{1}\right)  T_{J}f\left(  \xi-z_{2}\right)  T_{K}f\left(
\xi-z_{3}\right)  \right\vert ^{\frac{q}{3}}d\xi\right\}  d\mu_{\lambda
}\left(  z_{1},z_{2},z_{3}\right)  .
\end{align*}
Now consider those $a\in\Gamma_{\lambda}\left(  R\right)  $ for which
\textbf{Case 1} is in effect for the ball $B\left(  a,2^{\lambda}\right)  $
and denote by $\Gamma_{\lambda}\left(  \text{\textbf{Case 1}}\right)  $ the
union of all the balls $B\left(  a,2^{\lambda}\right)  $ for which $a$ is in
\textbf{Case 1}. Summing over points $a\in\Gamma_{\lambda}\left(  R\right)  $
such that \textbf{Case 1} is in effect for the ball $B\left(  a,2^{\lambda
}\right)  $, we obtain%
\begin{align*}
&  \sum_{a\in\Gamma_{\lambda}\left(  \text{\textbf{Case 1}}\right)  }%
\int_{B\left(  a,2^{\lambda}\right)  }\left\vert Tf\left(  \xi\right)
\right\vert ^{q}d\xi\\
&  \lesssim2^{q\left(  2+\alpha\right)  \lambda}\sum_{\left(  I,J,K\right)
\in\mathcal{G}_{\lambda}^{\nu-\mathop{\rm separated}}\left[  U\right]  }%
\int_{\mathbb{R}^{9}}\left\{  \sum_{a\in\Gamma_{\lambda}\left(
\text{\textbf{Case 1}}\right)  }\int_{B\left(  a,2^{\lambda}\right)
}\left\vert T_{I}f\left(  \xi-z_{1}\right)  T_{J}f\left(  \xi-z_{2}\right)
T_{K}f\left(  \xi-z_{3}\right)  \right\vert ^{\frac{q}{3}}d\xi\right\}
d\mu_{\lambda}\left(  z_{1},z_{2},z_{3}\right) \\
&  \lesssim2^{q\left(  2+\alpha\right)  \lambda}\sum_{\left(  I,J,K\right)
\in\mathcal{G}_{\lambda}^{\nu-\mathop{\rm separated}}\left[  U\right]  }%
\int_{\mathbb{R}^{9}}\left\{  \int_{B_{R}}\left\vert T_{I}f\left(  \xi
-z_{1}\right)  T_{J}f\left(  \xi-z_{2}\right)  T_{K}f\left(  \xi-z_{3}\right)
\right\vert ^{\frac{q}{3}}d\xi\right\}  d\mu_{\lambda}\left(  z_{1}%
,z_{2},z_{3}\right) \\
&  \leq2^{q\left(  2+\alpha\right)  \lambda}\left(  \#\mathcal{G}_{\lambda
}^{\nu-\mathop{\rm separated}}\left[  U\right]  \right)  \int_{\mathbb{R}^{9}%
}\left\{  C_{\varepsilon}^{q}R^{q\varepsilon}\right\}  d\mu_{\lambda}\left(
z_{1},z_{2}...,z_{N}\right)  \lesssim C_{\varepsilon}^{q}2^{q\left(
2+\alpha\right)  \lambda}2^{6\lambda}R^{q\varepsilon},
\end{align*}
upon appealing to the $\nu$-disjoint trilinear assumption $\mathcal{E}%
_{\mathop{\rm disj}\nu}\left(  \otimes_{3}L^{p}\rightarrow L^{\frac{p}{3}%
}\right)  $ in part (3) of Theorem \ref{main} with $\nu=2^{10}2^{-\lambda}$,
and%
\[
f_{1}=\widetilde{\mathsf{M}_{z_{1}}}f_{I},\ \ \ f_{2}=\widetilde
{\mathsf{M}_{z_{1}}}f_{J},\ \ \ \ f_{3}=\widetilde{\mathsf{M}_{z_{1}}}f_{K},
\]
where $\widetilde{\mathsf{M}_{z}}\left(  x\right)  =e^{i\left\langle
z,\Phi\left(  x\right)  \right\rangle }$. Indeed, if $\mathsf{M}_{z}\left(
w\right)  =e^{i\left\langle z,w\right\rangle }$ then $\mathsf{M}_{z}\Phi
_{\ast}=\Phi_{\ast}\widetilde{\mathsf{M}_{z}}$ since for $\varphi$ continuous,
and with the pushforward and pullback operators $\Phi_{\ast},\Phi^{\ast}$, we
have%
\begin{align*}
\left\langle \mathsf{M}_{z}\Phi_{\ast}g,\varphi\right\rangle  &  =\int\left\{
e^{i\left\langle z,w\right\rangle }\Phi_{\ast}g\left(  w\right)  \right\}
\varphi\left(  w\right)  dw=\int\left\{  e^{i\left\langle z,w\right\rangle
}\varphi\left(  w\right)  \right\}  \Phi_{\ast}g\left(  w\right)
dw=\int\left\{  e^{i\left\langle z,\Phi\left(  x\right)  \right\rangle
}\varphi\left(  \Phi\left(  x\right)  \right)  \right\}  g\left(  x\right)
dx\\
&  =\int\left\{  e^{i\left\langle z,\Phi\left(  x\right)  \right\rangle
}g\left(  x\right)  \right\}  \Phi^{\ast}\varphi\left(  x\right)
dx=\left\langle \widetilde{\mathsf{M}_{z}}g,\Phi^{\ast}\varphi\right\rangle
=\left\langle \Phi_{\ast}\widetilde{\mathsf{M}_{z}}g,\varphi\right\rangle .
\end{align*}
Thus,%
\begin{align*}
\left\vert T_{I}f\left(  \xi-z_{1}\right)  \right\vert  &  =\widehat
{\Phi_{\ast}f_{I}}\left(  \xi-z_{1}\right)  =\widehat{\mathsf{M}_{z_{1}}%
\Phi_{\ast}f_{I}}\left(  \xi\right)  =\widehat{\Phi_{\ast}\widetilde
{\mathsf{M}_{z_{1}}}f_{I}}\left(  \xi\right)  =\mathcal{E}f_{1}\left(
\xi\right)  ,\\
\text{and }\left\vert T_{J}f\left(  \xi-z_{2}\right)  \right\vert  &
=\mathcal{E}f_{1}\left(  \xi\right)  \text{ and }\left\vert T_{K}f\left(
\xi-z_{1}\right)  \right\vert =\mathcal{E}f_{3}\left(  \xi\right)  \text{,}%
\end{align*}
and so%
\begin{align*}
&  \int_{B_{R}}\left\vert T_{I}f\left(  \xi-z_{1}\right)  T_{J}f\left(
\xi-z_{2}\right)  T_{K}f\left(  \xi-z_{3}\right)  \right\vert ^{\frac{q}{3}%
}d\xi=\int_{B_{R}}\left\vert \mathcal{E}f_{1}\left(  \xi\right)
\mathcal{E}f_{2}\left(  \xi\right)  \mathcal{E}f_{3}\left(  \xi\right)
\right\vert ^{\frac{q}{3}}d\xi\\
&  =\left\Vert \prod_{j=1}^{3}\mathcal{E}_{j}f_{j}\right\Vert _{L^{^{\frac
{q}{3}}}\left(  B_{R}\right)  }^{\frac{q}{3}}\leq\left(  C_{\varepsilon,\nu
,q}R^{\varepsilon}\right)  ^{q}\prod_{j=1}^{3}\left\Vert f_{j}\right\Vert
_{L^{\infty}}^{\frac{q}{3}}=\left(  C_{\varepsilon,\nu,q}R^{\varepsilon
}\right)  ^{q},
\end{align*}
since the triple $\left(  I,J,K\right)  $ is $\nu$-separated as in
(\ref{weak sep}), and since $\left\vert f_{j}\right\vert \leq1$. As a
consequence we have%
\begin{align*}
&  \sum_{\left(  I,J,K\right)  \in\mathcal{G}_{\lambda}^{\nu
-\mathop{\rm separated}}\left[  U\right]  }\int_{B_{R}}\left\vert
T_{I}f\left(  \xi\right)  T_{J}f\left(  \xi\right)  T_{K}f\left(  \xi\right)
\right\vert ^{\frac{q}{3}}d\xi\\
&  \leq\sum_{\left(  I,J,K\right)  \in\mathcal{G}_{\lambda}^{\nu
-\mathop{\rm separated}}\left[  U\right]  }\left(  C_{\varepsilon,\nu
,q}R^{\varepsilon}\right)  ^{q}\lesssim2^{6\lambda}\left(  C_{\varepsilon
,\nu,q}R^{\varepsilon}\right)  ^{q}=\left(  C_{\varepsilon,\nu,q}\right)
^{q}2^{6\lambda}R^{q\varepsilon}.
\end{align*}

Altogether then, we have proved that
\[
\left\Vert \mathbf{1}_{\Gamma_{\lambda}\left(  \text{\textbf{Case 1}}\right)
}Tf\right\Vert _{L^{q}\left(  B_{R}\right)  }\lesssim C_{\varepsilon,\nu
,q}2^{\left(  \frac{6}{q}+2+\alpha\right)  \lambda}R^{\varepsilon},
\]
where $\mathbf{1}_{\Gamma_{\lambda}\left(  \text{\textbf{Case 1}}\right)  }$
indicates the union of those balls $B\left(  a,2^{\lambda}\right)  $ for which
\textbf{Case 1} holds.
\end{proof}

\subsection{Case 2: Clustered interaction\label{Sub Case 2}}

\begin{proof}
[Proof continued]In \textbf{Case 2} we assume the following property. If
$\left\vert c_{I}-c_{I_{\ast}^{a}}\right\vert >2^{-\gamma\lambda^{\prime}}$,
then $w_{I}^{a}\leq2^{-\delta\lambda}w_{\ast}^{a}$. In other words, if $I$ is
sufficiently far from $I_{\ast}^{a}$, then $w_{I}^{a}$ is much smaller than
$w_{\ast}^{a}$, i.e.%
\begin{align*}
\mathop{\rm dist}\left(  I,I_{\ast}^{a}\right)   &  >2^{-\gamma\lambda
^{\prime}}\Longrightarrow\int_{\mathbb{R}^{3}}\left\vert T_{I}f\left(
z\right)  \right\vert \zeta_{\lambda}\left(  z-a\right)  dz\leq2^{-\delta
\lambda}\int_{\mathbb{R}^{3}}\left\vert T_{I_{\ast}^{a}}f\left(  z\right)
\right\vert \zeta_{\lambda}\left(  z-a\right)  dz,\\
&  \fbox{$%
\begin{array}
[c]{ccccc}%
\mathbf{c}_{I} &  &  &  & \\
& \ast &  &  & \\
&  & \ast &  & \\
&  &  & \ast & \\
&  &  &  & \mathbf{c}_{I_{\ast}^{a}}%
\end{array}
$}.
\end{align*}
In this case we will use rescaling and recursion as in \cite{TaVaVe}.

Let $\xi\in B\left(  a,2^{\lambda}\right)  $ for some $a\in\Gamma_{\lambda
}\left(  R\right)  $. Using that $\left\vert c_{I}-c_{I_{\ast}^{a}}\right\vert
>2^{-\gamma\lambda^{\prime}}$ implies $w_{I}^{a}\leq2^{-\delta\lambda}w_{\ast
}^{a}$ in this case, we have with $\left\vert \cdot\right\vert
_{\mathop{\rm square}}$ denoting the `square' norm in $\mathbb{R}^{3}$,
\begin{align*}
\left\vert Tf\left(  \xi\right)  \right\vert  &  =\left\vert \sum
_{I\in\mathcal{G}_{\lambda}\left[  U\right]  }\int_{I}e^{i\phi\left(
\xi,y\right)  }f\left(  y\right)  dy\right\vert \\
&  \leq\left\vert \sum_{I\in\mathcal{G}_{\lambda}\left[  U\right]
:\ \left\vert c_{I}-c_{I_{\ast}^{a}}\right\vert _{\mathop{\rm square}}%
\leq2^{-\lambda^{\prime}}}\int_{I}e^{i\phi\left(  \xi,y\right)  }f\left(
y\right)  dy\right\vert +\sum_{I\in\mathcal{G}_{\lambda}\left[  U\right]
:\ \left\vert c_{I}-c_{I_{\ast}^{a}}\right\vert >2^{-\lambda^{\prime}}%
}\left\vert T_{I}f\left(  \xi\right)  \right\vert \\
&  \leq10\max_{K\in\mathcal{G}_{\lambda^{\prime}}\left[  U\right]  }\left\vert
\int_{K}e^{i\phi\left(  \xi,y\right)  }f\left(  y\right)  dy\right\vert
+\sum_{\left\vert c_{I}-c_{I_{\ast}^{a}}\right\vert >2^{-\lambda^{\prime}}%
}w_{I}^{a}\\
&  \leq10\max_{K\in\mathcal{G}_{\lambda^{\prime}}\left[  U\right]  }\left\vert
T_{K}f\left(  \xi\right)  \right\vert +\left(  \#\mathcal{G}_{\lambda}\left[
U\right]  \right)  2^{-\delta\lambda}w_{\ast}^{a}\leq10\max_{K\in
\mathcal{G}_{\lambda^{\prime}}\left[  U\right]  }\left\vert T_{K}f\left(
\xi\right)  \right\vert +2^{\left(  2-\delta\right)  \lambda}w_{\ast}^{a}\ ,
\end{align*}
since if $K_{\ast}^{a}\in\mathcal{G}_{\lambda^{\prime}}\left[  S\right]  $
contains $I_{\ast}^{a}$, then we decompose $f=f_{K_{\ast}^{a}}+\sum
_{I\in\mathcal{G}_{\lambda}\left[  S\right]  :\ I\cap K_{\ast}=\emptyset}%
f_{I}$, and without loss of generality we may also assume $\left\vert
c_{I}-c_{I_{\ast}^{a}}\right\vert \gtrsim2^{-\gamma\lambda^{\prime}}$. Now
\begin{align*}
\int\left\vert T_{I}f\left(  \xi-z\right)  \right\vert \zeta_{\lambda}%
^{a}\left(  z\right)  dz  &  \leq\left(  \int\left\vert T_{I}f\left(
\xi-z\right)  \right\vert ^{q}\zeta_{\lambda}\left(  z-a\right)  dz\right)
^{\frac{1}{q}}\left(  \int\zeta_{\lambda}\left(  z-a\right)  dz\right)
^{\frac{1}{q^{\prime}}}\\
&  \lesssim\left(  \int\left\vert T_{I}f\left(  z-a\right)  \right\vert
^{q}\zeta_{\lambda}\left(  z\right)  dz\right)  ^{\frac{1}{q}},
\end{align*}
and so for $\xi\in B\left(  a,2^{\lambda}\right)  $,
\begin{align*}
\left\vert Tf\left(  \xi\right)  \right\vert ^{q}  &  \leq C\sum
_{K\in\mathcal{G}_{\lambda^{\prime}}\left[  U\right]  }\left\vert
T_{K}f\left(  \xi\right)  \right\vert ^{q}+C2^{\left(  2-\delta\right)
\lambda q}\int\left\vert T_{I_{\ast}^{a}}f\left(  z\right)  \right\vert
^{q}\zeta_{\lambda}\left(  z-a\right)  dz\\
&  \leq C\sum_{K\in\mathcal{G}_{\lambda^{\prime}}\left[  U\right]  }\left\vert
T_{K}f\left(  \xi\right)  \right\vert ^{q}+C2^{\left(  2-\delta\right)
\lambda q}\sum_{I\in\mathcal{G}_{\lambda}\left[  U\right]  }\int\left\vert
T_{I}f\left(  z\right)  \right\vert ^{q}\zeta_{\lambda}\left(  z-a\right)  dz,
\end{align*}
where we have added in all $K\in\mathcal{G}_{\lambda^{\prime}}\left[
S\right]  $ rather than just $K_{\ast}^{a}$, and all $I\in\mathcal{G}%
_{\lambda}\left[  S\right]  $ rather than just $I_{\ast}^{a}$.

Summing over $a\in\Gamma_{\lambda}\left(  R\right)  $, we see that the
corresponding contribution over $B_{R}$ is at most%
\begin{align}
& \label{contrib Case 2}\\
\left\Vert \mathbf{1}_{\Gamma_{\lambda}\left(  \text{\textbf{Case 2}}\right)
}Tf\right\Vert _{L^{q}\left(  B_{R}\right)  }^{q} &  \equiv\sum_{a\in
\Gamma_{\lambda}\left(  R\right)  }\left\{  C\sum_{K\in\mathcal{G}%
_{\lambda^{\prime}}\left[  U\right]  }\int_{B\left(  a,2^{s}\right)
}\left\vert T_{K}f\left(  \xi\right)  \right\vert ^{q}d\xi+C2^{\left(
2-\delta\right)  \lambda q}\sum_{I\in\mathcal{G}_{\lambda}\left[  U\right]
}\int\left\vert T_{I}f\left(  z\right)  \right\vert ^{q}\zeta_{s}^{a}\left(
z\right)  dz\right\} \nonumber\\
&  \lesssim C\sum_{K\in\mathcal{G}_{\lambda^{\prime}}\left[  U\right]  }%
\int_{B_{R}}\left\vert T_{K}f\left(  \xi\right)  \right\vert ^{q}%
d\xi+C2^{-3\lambda}2^{\left(  2-\delta\right)  \lambda q}\sum_{I\in
\mathcal{G}_{\lambda}\left[  U\right]  }\int_{B_{R}}\left\vert T_{I}f\left(
\xi\right)  \right\vert ^{q}d\xi,\nonumber
\end{align}
since $\sum_{a\in\Gamma_{\lambda}\left(  R\right)  }\zeta_{\lambda}\left(
z-a\right)  \lesssim2^{-3\lambda}\mathbf{1}_{B_{R}}\left(  z\right)
+\mathop{\rm rapid}\mathop{\rm decay}$.

At this point we follow \cite{BoGu} in using parabolic rescaling, as
introduced in Tao, Vargas and Vega \cite{TaVaVe}, on the integral
\[
\mathop{\rm Int}_{\rho}\left(  \xi\right)  \equiv\int_{\left\vert
y-\overline{y}\right\vert <\rho}e^{i\phi\left(  \xi,y\right)  }f\left(
y\right)  dy=\int_{\left\vert y-\overline{y}\right\vert <\rho}e^{i\left[
\xi_{1}y_{1}+\xi_{2}y_{2}+\xi_{2}\left(  y_{1}^{2}+y_{2}^{2}\right)  \right]
}f\left(  y\right)  dy,\ \ \ \ \ \text{for }0<\rho<1,
\]
to obtain%
\begin{align*}
&  \left\vert \mathop{\rm Int}_{\rho}\left(  \xi\right)  \right\vert
\overset{y=\overline{y}+y^{\prime}}{=}\left\vert \int_{\left\vert y^{\prime
}\right\vert <\rho}e^{i\left[  \xi_{1}\left(  \overline{y}_{1}+y_{1}^{\prime
}\right)  +\xi_{2}\left(  \overline{y}_{2}+y_{2}^{\prime}\right)  +\xi
_{3}\left(  \left(  \overline{y}_{1}+y_{1}^{\prime}\right)  ^{2}+\left(
\overline{y}_{2}+y_{2}^{\prime}\right)  ^{2}\right)  \right]  }f\left(
\overline{y}+y^{\prime}\right)  dy^{\prime}\right\vert \\
&  =\left\vert \int_{\left\vert y^{\prime}\right\vert <\rho}e^{i\left[
\left(  \xi_{1}+2\overline{y_{1}}\xi_{3}\right)  y_{1}^{\prime}+\left(
\xi_{2}+2\overline{y_{2}}\xi_{3}\right)  y_{2}^{\prime}+\xi_{3}\left\vert
y^{\prime}\right\vert ^{2}\right]  }f\left(  \overline{y}+y^{\prime}\right)
dy^{\prime}\right\vert .
\end{align*}
Thus we conclude that%
\begin{align}
&  \ \ \ \ \ \ \ \ \ \ \ \ \ \ \ \ \ \ \ \ \ \ \ \ \ \ \ \ \ \ \left\Vert
\mathop{\rm Int}_{\rho}\right\Vert _{L^{q}\left(  B_{R}\right)  }=\left(
\int_{B_{R}}\left\vert \mathop{\rm Int}_{\rho}\left(  \xi\right)  \right\vert
^{q}d\xi\right)  ^{\frac{1}{q}}\label{Int est}\\
&  =\left(  \int_{B_{R}}\left\vert \int_{\left\vert y^{\prime}\right\vert
<\rho}e^{i\left[  \left(  \xi_{1}+2\overline{y_{1}}\xi_{3}\right)
y_{1}^{\prime}+\left(  \xi_{2}+2\overline{y_{2}}\xi_{3}\right)  y_{2}^{\prime
}+\xi_{3}\left\vert y^{\prime}\right\vert ^{2}\right]  }f\left(  \overline
{y}+y^{\prime}\right)  dy^{\prime}\right\vert ^{q}d\xi\right)  ^{\frac{1}{q}%
}\nonumber\\
&  =\left(  \int_{B_{R}}\left\vert \int_{\left\vert y^{\prime}\right\vert
<\rho}e^{i\left[  \left(  \rho\xi_{1}+2\overline{\frac{y_{1}}{\rho}}\rho
^{2}\xi_{3}\right)  \frac{y_{1}^{\prime}}{\rho}+\left(  \rho\xi_{2}%
+2\overline{\frac{y_{2}}{\rho}}\rho^{2}\xi_{3}\right)  y_{2}^{\prime}+\rho
^{2}\xi_{3}\left\vert \frac{y^{\prime}}{\rho}\right\vert ^{2}\right]
}f\left(  \overline{y}+y^{\prime}\right)  \rho^{2}d\left(  \frac{y^{\prime}%
}{\rho}\right)  \right\vert ^{q}\frac{d\left(  \rho\xi^{\prime}\right)
d\left(  \rho^{2}\xi_{3}\right)  }{\rho^{4}}\right)  ^{\frac{1}{q}}\nonumber\\
&  =\rho^{2}\rho^{-\frac{4}{q}}\left(  \int_{B_{\rho R}}\left\vert
\int_{\left\vert y^{\prime}\right\vert <1}e^{i\left[  \left(  \xi
_{1}+2\overline{y_{1}}\xi_{3}\right)  y_{1}^{\prime}+\left(  \xi
_{2}+2\overline{y_{2}}\xi_{3}\right)  y_{2}^{\prime}+\xi_{3}\left\vert
y^{\prime}\right\vert ^{2}\right]  }f\left(  \rho\left(  \overline
{y}+y^{\prime}\right)  \right)  dy^{\prime}\right\vert ^{q}d\xi^{\prime}%
d\xi_{3}\right)  ^{\frac{1}{q}}\leq C\rho^{2}\rho^{-\frac{4}{q}}Q_{\rho
R}^{\left(  q\right)  }\ ,\nonumber
\end{align}
where $Q_{\rho R}^{\left(  q\right)  }\leq Q_{R}^{\left(  q\right)  }$ is
defined in (\ref{Q_R}), since%
\begin{align}
&  \text{the }L^{\infty}\text{ norm of }f\text{ is unchanged by dilation, and
since}\label{invariant}\\
&  \text{the paraboloid is invariant under parabolic rescaling.}\nonumber
\end{align}
\newline Note that the factor $\rho^{2}$ arises from $\left\vert y^{\prime
}\right\vert <\rho$, and that the factor $\rho^{-\frac{4}{q}}Q_{\rho
R}^{\left(  q\right)  }$ arises from parabolic rescaling. These features
remain in play for an arbitrary quadratic surface of positive Gaussian curvature.

Thus using (\ref{Int est}), first with $\rho=2^{-\lambda^{\prime}}$ and then
with $\rho=2^{-\lambda}$, we obtain%
\[
\left\Vert T_{K}f\right\Vert _{L^{q}\left(  B_{R}\right)  }\lesssim2^{-\left(
2-\frac{4}{q}\right)  \lambda^{\prime}}Q_{2^{-\lambda^{\prime}}R}^{\left(
q\right)  }\text{ and }\left\Vert T_{I}f\right\Vert _{L^{q}\left(
B_{R}\right)  }\lesssim2^{-\left(  2-\frac{4}{q}\right)  \lambda
}Q_{2^{-\lambda}R}^{\left(  q\right)  }\ ,
\]
and together with (\ref{contrib Case 2}), we obtain that the contribution
$\left\Vert \mathbf{1}_{\Gamma_{\lambda}\left(  \text{\textbf{Case 2}}\right)
}Tf\right\Vert _{L^{q}\left(  B_{R}\right)  }$ to the norm $\left\Vert
Tf\right\Vert _{L^{q}\left(  B_{R}\right)  }$ satisfies:
\begin{align}
&  \left\Vert \mathbf{1}_{\Gamma_{\lambda}\left(  \text{\textbf{Case 2}%
}\right)  }Tf\right\Vert _{L^{q}\left(  B_{R}\right)  }\leq C\left(
\#\mathcal{G}_{\lambda^{\prime}}\left[  S\right]  \right)  ^{\frac{1}{q}%
}\left(  2^{-\lambda^{\prime}}\right)  ^{2-\frac{4}{q}}Q_{2^{-\lambda^{\prime
}}R}^{\left(  q\right)  }\label{case 2 est}\\
&  \ \ \ \ \ \ \ \ \ \ \ \ \ \ \ +C2^{-\left(  \delta-2+\frac{3}{q}\right)
\lambda}\left(  \#\mathcal{G}_{\lambda}\left[  S\right]  \right)  ^{\frac
{1}{q}}\left(  2^{-\lambda}\right)  ^{2-\frac{4}{q}}Q_{2^{-\lambda}R}^{\left(
q\right)  }\nonumber\\
&  =C2^{\left(  \frac{6}{q}-2\right)  \lambda^{\prime}}Q_{2^{-\lambda^{\prime
}}R}^{\left(  q\right)  }+C2^{\left(  \frac{3}{q}-\delta\right)  \lambda
}Q_{2^{-\lambda}R}^{\left(  q\right)  }\ ,\nonumber
\end{align}
since%
\[
\left(  \#\mathcal{G}_{\lambda^{\prime}}\left[  S\right]  \right)  ^{\frac
{1}{q}}\left(  2^{-\lambda^{\prime}}\right)  ^{2}\left(  2^{-\lambda^{\prime}%
}\right)  ^{-\frac{4}{q}}=2^{\frac{2}{q}\lambda^{\prime}}2^{-2\lambda^{\prime
}}2^{\frac{4}{q}\lambda^{\prime}}=2^{\left(  \frac{6}{q}-2\right)
\lambda^{\prime}},
\]
and%
\[
2^{-\left(  \delta-2+\frac{3}{q}\right)  \lambda}\left(  \#\mathcal{G}%
_{\lambda}\left[  S\right]  \right)  ^{\frac{1}{q}}\left(  2^{-\lambda
}\right)  ^{2-\frac{4}{q}}=2^{-\frac{3}{q}\lambda}2^{\left(  2-\delta\right)
\lambda}2^{\frac{2}{q}\lambda}2^{-2\lambda}2^{\frac{4}{q}\lambda}=2^{\left(
\frac{3}{q}-\delta\right)  \lambda}\ .
\]

\end{proof}

\subsection{Case 3: Dipole interaction\label{Sub Case 3}}

\begin{proof}
[Proof continued]In \textbf{Case 3} we assume the negation of both
\textbf{Case 1} and \textbf{Case 2}. The failure of clustered interaction
implies that there also exists $I_{\ast\ast}^{a}$ with $w_{I_{\ast\ast}^{a}%
}>2^{-\delta\lambda}w_{\ast}^{a}$ and $\left\vert c_{I_{\ast\ast}^{a}%
}-c_{I_{\ast}^{a}}\right\vert >2^{-\gamma\lambda^{\prime}}$, i.e.%
\begin{align*}
\int_{\mathbb{R}^{3}}\left\vert T_{I_{\ast\ast}^{a}}f\left(  z\right)
\right\vert \zeta_{\lambda}\left(  z-a\right)  dz &  =w_{I_{\ast\ast}^{a}%
}>2^{-\delta\lambda}w_{\ast}^{a}=2^{-\delta\lambda}\int_{\mathbb{R}^{3}%
}\left\vert T_{I_{\ast}^{a}}f\left(  z\right)  \right\vert \zeta_{\lambda
}\left(  z-a\right)  dz\ ,\\
&  \mathop{\rm dist}\left(  I_{\ast\ast},I_{\ast}\right)  >2^{-\gamma
\lambda^{\prime}}\ ,
\end{align*}
The simultaneous failure of separated interaction further implies that%
\begin{align}
w_{I}^{a} &  \leq2^{-\alpha\lambda}w_{\ast}^{a}\text{ if }%
\mathop{\rm dist}\left(  c_{I},I_{\ast}^{a}\cup I_{\ast\ast}^{a}\right)
>2^{10}2^{-\beta\lambda},\label{simul}\\
&  \fbox{$%
\begin{array}
[c]{ccccccc}%
\mathbf{c}_{I_{\ast\ast}^{a}} &  &  &  &  &  & \\
& \ast &  &  &  &  & \\
&  & \ast &  &  &  & \\
&  &  & \ast &  &  & \\
&  &  &  & \mathbf{c}_{I_{\ast}^{a}} &  & \\
&  &  &  &  &  & \mathbf{c}_{I}^{a}%
\end{array}
$}\nonumber
\end{align}

In this case we will again use parabolic rescaling since the squares $I$ with
near maximal weight, i.e. $2^{-\alpha\lambda}w_{\ast}^{a}$, are clustered
within distance $2^{10}2^{-\beta\lambda}$ of the squares $I_{\ast}^{a}$ and
$I_{\ast\ast}^{a}$. Indeed, arguing as in (\ref{contrib Case 2}) and
(\ref{case 2 est}) above, we then have%
\begin{align*}
&  \left\Vert \mathbf{1}_{\Gamma_{\lambda}\left(  \text{\textbf{Case 3}%
}\right)  }Tf\right\Vert _{L^{q}\left(  B_{R}\right)  }\\
&  \lesssim\left\Vert \mathbf{1}_{\Gamma_{\lambda}\left(  \text{\textbf{Case
3}}\right)  }\sum_{\substack{I:\ \mathop{\rm dist}\left(  c_{I},I_{\ast}%
^{a}\right)  \leq2^{10}2^{-\lambda^{\prime}} \\\text{or }%
\mathop{\rm dist}\left(  c_{I},I_{\ast}^{a}\cup I_{\ast\ast}^{a}\right)
>2^{10}2^{-\lambda^{\prime}}}}T_{I}f\right\Vert _{L^{q}\left(  B_{R}\right)
}+\left\Vert \mathbf{1}_{\Gamma_{\lambda}\left(  \text{\textbf{Case 3}%
}\right)  }\sum_{I:\ \mathop{\rm dist}\left(  c_{I},I_{\ast\ast}^{a}\right)
\leq2^{10}2^{-\lambda^{\prime}}}T_{I}f\right\Vert _{L^{q}\left(  B_{R}\right)
}\\
&  \lesssim C2^{\left(  \frac{6}{q}-2\right)  \lambda^{\prime}}Q_{2^{-\lambda
^{\prime}}R}^{\left(  q\right)  }+C2^{\left(  \frac{3}{q}-\delta\right)
\lambda}Q_{2^{-\lambda}R}^{\left(  q\right)  }\ .
\end{align*}
Note that we needed only to consider the squares satisfying $\mathop{\rm
dist}\left(  c_{I},I_{\ast}^{a}\cup I_{\ast\ast}^{a}\right)  >2^{10}%
2^{-\beta\lambda}$ in (\ref{contrib Case 2}), for just one of the dipoles
$I_{\ast}^{a}$ or $I_{\ast\ast}^{a}$.
\end{proof}

\subsection{Completing the proof}

\begin{proof}
[Proof continued]So far we have shown that if $\lambda^{\prime}=\frac{\beta
}{\gamma}\lambda$ and%
\[
0<\beta<\gamma\ \text{and }0<\alpha=\delta\leq3,
\]
then
\begin{align*}
\left\Vert Tf\right\Vert _{L^{q}\left(  B_{R}\right)  } &  \leq\left\Vert
\mathbf{1}_{\Gamma_{\lambda}\left(  \text{\textbf{Case 1}}\right)
}Tf\right\Vert _{L^{q}\left(  B_{R}\right)  }+\left\Vert \mathbf{1}%
_{\Gamma_{\lambda}\left(  \text{\textbf{Case 2}}\right)  }Tf\right\Vert
_{L^{q}\left(  B_{R}\right)  }+\left\Vert \mathbf{1}_{\Gamma_{\lambda}\left(
\text{\textbf{Case 3}}\right)  }Tf\right\Vert _{L^{q}\left(  B_{R}\right)  }\\
&  \lesssim C_{\varepsilon,\nu,q}2^{\left(  \frac{6}{q}+2+\alpha\right)
\lambda}R^{\varepsilon}+C2^{\left(  \frac{6}{q}-2\right)  \lambda^{\prime}%
}Q_{2^{-\lambda^{\prime}}R}^{\left(  q\right)  }+C2^{\left(  \frac{3}%
{q}-\delta\right)  \lambda}Q_{2^{-\lambda}R}^{\left(  q\right)  }+\left(
2^{-\beta\lambda}\right)  ^{2-\frac{4}{q}}Q_{2^{-\beta\lambda}R}^{\left(
q\right)  }\\
&  \lesssim C_{\varepsilon,\nu,q}2^{\left(  \frac{6}{q}+2+\alpha\right)
\lambda}R^{\varepsilon}+\left[  2^{-\frac{2}{q}\left(  q-3\right)
\lambda^{\prime}}+2^{-\left(  \delta-\frac{3}{q}\right)  \lambda}%
+2^{-\beta\left(  2-\frac{4}{q}\right)  \lambda}\right]  Q_{R}^{\left(
q\right)  }\\
&  \lesssim C_{\varepsilon,\nu,q}2^{\left(  \frac{6}{q}+2+\alpha\right)
\lambda}R^{\varepsilon}+\frac{1}{2}Q_{R}^{\left(  q\right)  }\ ,
\end{align*}
if $q>3$ and both $\lambda$ and $\lambda^{\prime}$ are sufficiently large,
namely%
\begin{equation}
2^{-\left(  q-3\right)  \frac{2}{q}\frac{\beta}{\gamma}\lambda}+2^{-\left(
\delta-\frac{3}{q}\right)  \lambda}+2^{-\beta\left(  2-\frac{4}{q}\right)
\lambda}<\frac{1}{2}.\label{namely}%
\end{equation}
If we take $\beta=1$ and $\alpha=\delta=\gamma=2$, then (\ref{namely})
becomes,%
\[
2^{-\left(  1-\frac{3}{q}\right)  \lambda}+2^{-\left(  2-\frac{3}{q}\right)
\lambda}+2^{-\left(  2-\frac{4}{q}\right)  \lambda}<\frac{1}{2},
\]
which is satisfied if
\[
\left(  1-\frac{3}{q}\right)  \lambda,\left(  2-\frac{4}{q}\right)
\lambda\geq3\text{, in particular if }\lambda=\frac{3q}{q-3}.
\]
Thus
\[
Q_{R}^{\left(  q\right)  }=\sup_{\left\Vert f\right\Vert _{L^{\infty}}\leq
1}\left\Vert Tf\right\Vert _{L^{q}\left(  B_{R}\right)  }\lesssim
C_{\varepsilon,\nu,q}2^{\left(  \frac{6}{q}+4\right)  \lambda}R^{\varepsilon
}+\frac{1}{2}Q_{R}^{\left(  q\right)  },
\]
and absorption now yields the inequality,%
\[
Q_{R}^{\left(  q\right)  }\leq2C_{\varepsilon,\nu,q}2^{\left(  \frac{6}%
{q}+4\right)  \lambda}R^{\varepsilon}\leq2C_{\varepsilon,\nu,q}2^{\left(
\frac{6}{q}+4\right)  \frac{3q}{q-3}}R^{\varepsilon}\leq2C_{\varepsilon,\nu
,q}2^{18\frac{q}{q-3}}R^{\varepsilon},\ \ \ \ \ \text{for all }R\geq1.
\]

Thus the disjoint constant $\nu=2^{10}2^{-\beta\lambda}$ is given by
\begin{equation}
\nu=2^{10}2^{-\frac{3q}{q-3}},\label{nu}%
\end{equation}
which depends only on how much larger $q$ is than $3$. This completes the
proof of Theorem \ref{Loc lin}.
\end{proof}

\section{A smooth Alpert characterization}

First we recall the construction from \cite{Saw7} of smooth Alpert projections
$\left\{  \bigtriangleup_{Q;\kappa}\right\}  _{Q\in\mathcal{D}}$ and
corresponding wavelets $\left\{  h_{Q;\kappa}^{a}\right\}  _{Q\in
\mathcal{D},\ a\in\Gamma_{n}}$ of order $\kappa$ in $n$-dimensional space
$\mathbb{R}^{n}$, giving fairly complete definitions and statements. In fact,
$\left\{  h_{Q;\kappa}^{a}\right\}  _{a\in\Gamma}$ is an orthonormal basis for
the finite dimensional vector subspace of $L^{2}$ that consists of linear
combinations of the indicators of\ the children $\mathfrak{C}\left(  Q\right)
$ of $Q$ multiplied by polynomials of degree at most $\kappa-1$, and such that
the linear combinations have vanishing moments on the cube $Q$ up to order
$\kappa-1$:%
\[
L_{Q;k}^{2}\left(  \mu\right)  \equiv\left\{
f=\mathop{\displaystyle \sum }\limits_{Q^{\prime}\in\mathfrak{C}\left(
Q\right)  }\mathbf{1}_{Q^{\prime}}p_{Q^{\prime};k}\left(  x\right)  :\int
_{Q}f\left(  x\right)  x_{i}^{\ell}d\mu\left(  x\right)  =0,\ \ \ \text{for
}0\leq\ell\leq k-1\text{ and }1\leq i\leq n\right\}  ,
\]
where $p_{Q^{\prime};k}\left(  x\right)  =\sum_{\alpha\in\mathbb{Z}_{+}%
^{n}:\left\vert \alpha\right\vert \leq k-1\ }a_{Q^{\prime};\alpha}x^{\alpha}$
is a polynomial in $\mathbb{R}^{n}$ of degree $\left\vert \alpha\right\vert
=\alpha_{1}+...+\alpha_{n}$ at most $\kappa-1$, and $x^{\alpha}=x_{1}%
^{\alpha_{1}}x_{2}^{\alpha_{2}}...x_{n-1}^{\alpha_{n-1}}$. Let $d_{Q;\kappa
}\equiv\dim L_{Q;\kappa}^{2}\left(  \mu\right)  $ be the dimension of the
finite dimensional linear space $L_{Q;\kappa}^{2}\left(  \mu\right)  $.
Moreover, for each $a\in\Gamma_{n}$, we may assume the wavelet $h_{Q;\kappa
}^{a}$ is a translation and dilation of the unit wavelet $h_{Q_{0};\kappa}%
^{a}$, where $Q_{0}=\left[  0,1\right)  ^{n}$ is the unit cube in
$\mathbb{R}^{n}$.

Given a small positive constant $\eta>0$, define a smooth approximate identity
by $\phi_{\eta}\left(  x\right)  \equiv\eta^{-n}\phi\left(  \frac{x}{\eta
}\right)  $ where $\phi\in C_{c}^{\infty}\left(  B_{\mathbb{R}^{n}}\left(
0,1\right)  \right)  $ has unit integral, $\int_{\mathbb{R}^{n}}\phi\left(
x\right)  dx=1$, and vanishing moments of \emph{positive} order less than
$\kappa$, i.e.
\begin{equation}
\int\phi\left(  x\right)  x^{\gamma}dx=\delta_{\left\vert \gamma\right\vert
}^{0}=\left\{
\begin{array}
[c]{ccc}%
1 & \text{ if } & \left\vert \gamma\right\vert =0\\
0 & \text{ if } & 0<\left\vert \gamma\right\vert <\kappa
\end{array}
\right.  .\label{van pos}%
\end{equation}
The \emph{smooth} Alpert `wavelets' are defined by%
\[
h_{Q;\kappa}^{a,\eta}\equiv h_{Q;\kappa}^{a}\ast\phi_{\eta\ell\left(
Q\right)  },
\]
and we have for $0\leq\left\vert \beta\right\vert <\kappa$,%
\begin{align*}
&  \int h_{Q;\kappa}^{a,\eta}\left(  x\right)  x^{\beta}dx=\int\phi_{\eta
\ell\left(  I\right)  }\ast h_{Q;\kappa}^{a}\left(  x\right)  x^{\beta}%
dx=\int\int\phi_{\eta\ell\left(  I\right)  }\left(  y\right)  h_{Q;\kappa}%
^{a}\left(  x-y\right)  x^{\beta}dx\\
&  =\int\phi_{\eta\ell\left(  I\right)  }\left(  y\right)  \left\{  \int
h_{Q;\kappa}^{a}\left(  x-y\right)  x^{\beta}dx\right\}  dy=\int\phi_{\eta
\ell\left(  I\right)  }\left(  y\right)  \left\{  \int h_{Q;\kappa}^{a}\left(
x\right)  \left(  x+y\right)  ^{\beta}dx\right\}  dy\\
&  =\int\phi_{\eta\ell\left(  I\right)  }\left(  y\right)  \left\{  0\right\}
dy=0,
\end{align*}
by translation invariance of Lebesgue measure.

There is a linear map $S_{\eta}^{\mathcal{D}}=S_{\kappa,\eta}^{\mathcal{D}}$,
bounded and invertible on all $L^{p}\left(  \mathbb{R}^{2}\right)  $ spaces,
$1<p<\infty$,$\,$such that if we define%
\[
\bigtriangleup_{I;\kappa}^{\eta}f\equiv\left(  \bigtriangleup_{I;\kappa
}f\right)  \ast\phi_{\eta\ell\left(  I\right)  },
\]
then%
\[
\bigtriangleup_{I;\kappa}^{\eta}f\equiv\sum_{a\in\Gamma_{n}}\left\langle
\left(  S_{\eta}^{\mathcal{D}}\right)  ^{-1}f,h_{I;\kappa}^{a}\right\rangle
h_{I;\kappa}^{a,\eta}=\sum_{a\in\Gamma_{n}}\left\langle \left(  S_{\eta
}^{\mathcal{D}}\right)  ^{-1}f,h_{I;\kappa}^{a}\right\rangle S_{\eta
}^{\mathcal{D}}h_{I;\kappa}^{a}=\sum_{a\in\Gamma_{n}}\left(  S_{\eta
}^{\mathcal{D}}\bigtriangleup_{I;\kappa}\left(  S_{\eta}^{\mathcal{D}}\right)
^{-1}\right)  f=\sum_{a\in\Gamma_{n}}\bigtriangleup_{I;\kappa}^{\spadesuit
}f\ ,
\]
where $A^{\spadesuit}$ denotes the commutator $S_{\eta}^{\mathcal{D}}A\left(
S_{\eta}^{\mathcal{D}}\right)  ^{-1}$ of an operator $A$ with $S_{\eta
}^{\mathcal{D}}$.

\begin{theorem}
[\cite{Saw7}]\label{reproducing}Let $n\geq2$ and $\kappa\in\mathbb{N}$ with
$\kappa>\frac{n}{2}$. Then there is $\eta_{0}>0$ depending on $n$ and $\kappa
$\ such that for all $0<\eta<\eta_{0}$, and for all grids $\mathcal{D}$ in
$\mathbb{R}^{n} $, and all $1<p<\infty$, there is a bounded invertible
operator $S_{\eta}^{\mathcal{D}}=S_{\kappa,\eta}^{\mathcal{D}}$ on $L^{p}$,
and a positive constant $C_{p,n,\eta}$ such that the collection of functions
$\left\{  h_{I;\kappa}^{a,\eta}\right\}  _{I\in\mathcal{D},\ a\in\Gamma_{n}}$
is a $C_{p,n,\eta}$-frame for $L^{p}$, by which we mean,%
\begin{align}
f\left(  x\right)   &  =\sum_{I\in\mathcal{D},\ a\in\Gamma_{n}}\bigtriangleup
_{I;\kappa}^{\eta}f\left(  x\right)  ,\ \ \ \ \ \text{for all }f\in
L^{p},\label{bounded below}\\
\text{where }\bigtriangleup_{I;\kappa}^{\eta}f  &  \equiv\sum_{a\in\Gamma_{n}%
}\left\langle \left(  S_{\eta}^{\mathcal{D}}\right)  ^{-1}f,h_{I;\kappa}%
^{a}\right\rangle \ h_{I;\kappa}^{a,\eta}\ ,\nonumber
\end{align}
and with convergence of the sum in both the $L^{p}$ norm and almost
everywhere, and%
\begin{equation}
\frac{1}{C_{p,n,\eta}}\left\Vert f\right\Vert _{L^{p}}\leq\left\Vert \left(
\sum_{I\in\mathcal{D}}\left\vert \bigtriangleup_{I;\kappa}^{\eta}f\right\vert
^{2}\right)  ^{\frac{1}{2}}\right\Vert _{L^{p}}\leq C_{p,n,\eta}\left\Vert
f\right\Vert _{L^{p}},\ \ \ \ \ \text{for all }f\in L^{p}.\label{square est}%
\end{equation}
Moreover, the smooth Alpert wavelets $\left\{  h_{I;\kappa}^{a,\eta}\right\}
_{I\in\mathcal{D},\ a\in\Gamma_{n}}$ are translation and dilation invariant in
the sense that $h_{I;\kappa}^{a,\eta}$ is a translate and dilate of the mother
Alpert wavelet $h_{I_{0};\kappa}^{a,\eta}$ where $I_{0}$ is the unit cube in
$\mathbb{R}^{n}$.
\end{theorem}

\begin{notation}
\label{Notation Alpert} We will often drop the index $a$ parameterized by the
finite set $\Gamma_{n}$ as it plays no essential role in most of what follows,
and it will be understood that when we write
\[
\bigtriangleup_{Q;\kappa}^{\eta}f=\left\langle \left(  S_{\eta}^{\mathcal{D}%
}\right)  ^{-1}f,h_{Q;\kappa}\right\rangle h_{Q;\kappa}^{\eta}=\widehat
{f}\left(  Q\right)  h_{Q;\kappa}^{\eta},
\]
we \emph{actually} mean the Alpert \emph{pseudoprojection},%
\[
\bigtriangleup_{Q;\kappa}^{\eta}f=\sum_{a\in\Gamma_{n}}\left\langle \left(
S_{\eta}^{\mathcal{D}}\right)  ^{-1}f,h_{Q;\kappa}^{a}\right\rangle
h_{Q;\kappa}^{\eta,a}=\sum_{a\in\Gamma_{n}}\widehat{f_{a}}\left(  Q\right)
h_{Q;\kappa}^{a,\eta}\ ,
\]
where $\widehat{f_{a}}\left(  Q\right)  $ is a convenient abbreviation for the
inner product $\left\langle \left(  S_{\eta}^{\mathcal{D}}\right)
^{-1}f,h_{Q;\kappa}^{a}\right\rangle $ when $\kappa$ is understood. More
precisely, one can view $\widehat{f}\left(  Q\right)  =\left\{  \widehat
{f_{a}}\left(  Q\right)  \right\}  _{a\in\Gamma_{n}}$ and $h_{Q;\kappa}^{\eta
}=\left\{  h_{Q;\kappa}^{a,\eta}\right\}  _{a\in\Gamma_{n}}$ as sequences of
numbers and functions indexed by $\Gamma_{n}$, in which case $\widehat
{f}\left(  Q\right)  h_{Q;\kappa}^{\eta}$ is the dot product of these two
sequences. No confusion should arise between the Alpert coefficient
$\widehat{g}\left(  Q\right)  $, $Q\in\mathcal{G}\left[  U\right]  $ and the
Fourier transform $\widehat{g}\left(  \xi\right)  $, $\xi\in\mathbb{R}^{3}$,
as the argument in the first is a square in $\mathcal{G}\left[  U\right]  $,
while the argument in the second is a point in $\mathbb{R}^{3}$.
\end{notation}

For $s\in\mathbb{N}$, and $K\in\mathcal{G}\left[  S\right]  $, with
$S\subset\mathbb{R}^{2}$ centered at the origin as above, and with
$\ell\left(  K\right)  \geq2^{-s}$, we define the smooth Alpert
pseudoprojection at scale $s$ by,%
\begin{equation}
\mathsf{Q}_{s,K;\kappa}^{\eta}f\equiv\sum_{I\in\mathcal{G}_{s}\left[
K\right]  }\bigtriangleup_{I;\kappa}^{\eta}f\text{ and }f_{s,K}^{\Phi}%
\equiv\Phi_{\ast}\mathsf{Q}_{s,K;\kappa}^{\eta}f=\sum_{I\in\mathcal{G}%
_{s}\left[  K\right]  }\Phi_{\ast}\bigtriangleup_{I;\kappa}^{\eta
}f.\label{def A proj}%
\end{equation}

\subsection{Initial setup and statement of the main Alpert characterization}

Now we return to three dimensions. We recall some of the notation in
\cite[Subsection 1.4]{Saw7} regarding local coordinates on the sphere, and
pushforwards of smooth Alpert wavelets near the origin in $\mathbb{R}^{2}$.
Fix a small cube $U_{0}$ in $\mathbb{R}^{n-1}$ with side length a negative
power of $2$, and such that there is a translation $\mathcal{G}$ of the
standard grid on $\mathbb{R}^{n-1}$ with the property that $U_{0}%
\in\mathcal{G}$, the grandparent $\pi_{\mathcal{G}}^{\left(  2\right)  }U_{0}$
of $U_{0}$ has the origin as a vertex, and $U_{0}$ is an interior grandchild
of $U\equiv\pi_{\mathcal{G}}^{\left(  2\right)  }U_{0}$, so that%
\begin{equation}
U_{0},U\in\mathcal{G}\text{ with }U_{0}\subset\frac{1}{2}U\text{.}%
\label{support}%
\end{equation}

Then parameterize a patch of the paraboloid $\mathbb{P}^{2}$ in the usual way,
i.e. $\Phi:U\rightarrow\mathbb{S}^{2}$ by
\[
z=\Phi\left(  x\right)  \equiv\left(  x,\left\vert x\right\vert ^{2}\right)
=\left(  x_{1},x_{2},x_{1}^{2}+x_{2}^{2}\right)  .
\]
For $f\in L^{p}\left(  U\right)  $, define
\[
\mathcal{E}f\left(  \xi\right)  =\mathcal{E}_{U}f\left(  \xi\right)
\equiv\mathcal{F}\left(  \Phi_{\ast}\left[  f\left(  x\right)  dx\right]
\right)  =\int_{U}e^{-i\Phi\left(  x\right)  \cdot\xi}f\left(  x\right)  dx,
\]
where $\mathcal{F}$ is the Fourier transform in $\mathbb{R}^{3}$.

Recall that for $\nu>0$, we say that a triple $\left(  U_{1},U_{2}%
,U_{3}\right)  $ of squares in $U\subset B_{\mathbb{R}^{2}}\left(  0,\frac
{1}{2}\right)  $ is $\nu$\emph{-disjoint} if
\[
\mathop{\rm diam}\left[  \Phi\left(  U_{k}\right)  \right]  \approx
\mathop{\rm dist}\left[  \Phi\left(  U_{k}\right)  ,\bigcup_{j:\ j\neq k}%
\Phi\left(  U_{j}\right)  \right]  \geq\nu,\text{ for }1\leq k\leq3,
\]

\begin{definition}
Let $\varepsilon,\nu>0$ and $0<\delta<1$, $\kappa\in\mathbb{N}$ and
$1<q<\infty$. Set $A\left(  0,2^{r}\right)  \equiv B\left(  0,2^{r}\right)
\setminus B\left(  0,2^{r-1}\right)  $ for $r\in\mathbb{N}$. Denote by
$\mathcal{A}_{\mathop{\rm disj}\nu}^{\kappa,\delta}\left(  \otimes
_{3}L^{\infty}\rightarrow L^{\frac{q}{3}};\varepsilon\right)  $ the trilinear
smooth Alpert inequality,%
\begin{equation}
\left(  \int_{A\left(  0,2^{r}\right)  }\left(  \left\vert \mathcal{E}%
\mathsf{Q}_{s_{1},U_{1}}^{\eta}f_{1}\left(  \xi\right)  \right\vert
\ \left\vert \mathcal{E}\mathsf{Q}_{s_{2},U_{2}}^{\eta}f_{2}\left(
\xi\right)  \right\vert \ \left\vert \mathcal{E}\mathsf{Q}_{s_{3},U_{3}}%
^{\eta}f_{3}\left(  \xi\right)  \right\vert \right)  ^{\frac{q}{3}}%
\ d\xi\right)  ^{\frac{3}{q}}\leq C_{\varepsilon,\nu,\kappa,\delta
,q}2^{\varepsilon r}\left\Vert f_{1}\right\Vert _{L^{\infty}}\left\Vert
f_{2}\right\Vert _{L^{\infty}}\left\Vert f_{3}\right\Vert _{L^{\infty}%
}\ ,\label{annular}%
\end{equation}
for all $r\in\mathbb{N}$, all $\nu$\emph{-disjoint} triples $\left(
U_{1},U_{2},U_{3}\right)  $, all smooth Alpert pseudoprojections
$\mathsf{Q}_{s_{k},U_{k}}^{\eta}$ with moment vanishing parameter $\kappa$ and
with $s_{1}\leq s_{2}$ and $\frac{r}{1+\delta}<s_{2}\leq s_{3}<\frac
{r}{1-\delta} $, and all $f_{1},f_{2},f_{3}\in L^{\infty}$.
\end{definition}

\begin{theorem}
\label{main Alpert}Let $0<\delta<1$ and $\kappa>\frac{20}{\delta}$. The
Fourier extension conjecture (\ref{FEC}) for the paraboloid in $\mathbb{R}%
^{3}$ holds \emph{if and only if} for every $q>3$ there is $\nu>0$ depending
only on $q$, such that the disjoint \emph{smooth Alpert }trilinear inequality
$\mathcal{A}_{\mathop{\rm disj}\nu}^{\kappa,\delta}\left(  \otimes
_{3}L^{\infty}\rightarrow L^{\frac{q}{3}};\varepsilon\right)  $ holds for all
$\varepsilon>0$.
\end{theorem}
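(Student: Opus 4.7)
My plan is to prove the two implications separately. The forward direction is a short H\"older argument, while the reverse reduces, via Theorem \ref{main}, to showing that the Alpert hypothesis implies statement (3) of that theorem.

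For the forward direction ($\Rightarrow$), assume (\ref{FEC}) holds. I would apply H\"older's inequality with exponents $(q,q,q)$ to the left side of (\ref{annular}) to bound it by $\prod_{k=1}^{3}\|\mathcal{E}\mathsf{Q}_{s_k,U_k}^{\eta}f_k\|_{L^{q}(A(0,2^r))}$. The Fourier extension hypothesis then controls each factor by $\|\mathsf{Q}_{s_k,U_k}^{\eta}f_k\|_{L^{q}(U)}$, and the square function estimate (\ref{square est}) of Theorem \ref{reproducing}, combined with $|U_k|^{1/q}\lesssim 1$, yields $\|\mathsf{Q}_{s_k,U_k}^{\eta}f_k\|_{L^{q}}\lesssim\|f_k\|_{L^{\infty}}$. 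This delivers (\ref{annular}) uniformly in $r$, with no $2^{\varepsilon r}$ factor needed at all.

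For the reverse direction ($\Leftarrow$), by Theorem \ref{main} it suffices to derive the disjoint trilinear inequality $\mathcal{E}_{\limfunc{disj}\nu}(\otimes_{3}L^{\infty}\rightarrow L^{q/3};\varepsilon)$ from the Alpert hypothesis. Fix a $\nu$-disjoint triple $(U_1,U_2,U_3)$ and $f_k\in L^{\infty}(U_k)$ with $\|f_k\|_{L^{\infty}}\leq 1$. Slice $B(0,R)$ into the dyadic annuli $A(0,2^r)$ for $0\leq r\leq\log_{2}R$, and on each annulus expand each $f_k$ via the smooth Alpert frame (\ref{bounded below}), grouped by scale: $f_k=\sum_{s}\mathsf{Q}_{s,U_k}^{\eta}f_k$ (modulo a finite number of low-frequency polynomial pieces coming from cubes that contain $U_k$, which are handled separately by an elementary estimate). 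The product $\prod_k\mathcal{E}f_k$ restricted to $A(0,2^r)$ then becomes the triple sum $\sum_{s_1,s_2,s_3}\prod_k\mathcal{E}\mathsf{Q}_{s_k,U_k}^{\eta}f_k$, which I would split, after ordering $s_1\leq s_2\leq s_3$ using the symmetry of the $\nu$-disjoint hypothesis in the three patches, into an \emph{on-scale} piece where both $s_2$ and $s_3$ lie in $(r/(1+\delta),r/(1-\delta))$ and an \emph{off-scale} remainder.

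On the on-scale piece the Alpert trilinear hypothesis gives an $L^{q/3}(A(0,2^r))$ bound of $2^{\varepsilon r}\prod_k\|f_k\|_{L^{\infty}}$ for each admissible triple, and summing over the polynomially-many admissible $(s_1,s_2,s_3)$ and the $O(\log R)$ annuli absorbs into a single $R^{\varepsilon}$ factor after readjusting $\varepsilon$. For the off-scale piece the key point is that smooth Alpert wavelets with $\kappa$ vanishing moments at scale $2^{-s}$ are essentially frequency-localized to $x$-frequencies of order $2^s$, and a non-stationary phase analysis of the phase $\xi\mapsto\xi\cdot\Phi(x)$ for $\xi\in A(0,2^r)$ should produce rapid decay of $|\mathcal{E}\mathsf{Q}_{s,U_k}^{\eta}f_k(\xi)|$ whenever $s$ lies outside the on-scale window, with decay rate controlled by $\kappa$. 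The assumption $\kappa>10/\delta$ is precisely what is needed to make this decay dominate the combinatorial losses from summing the tails. The main obstacle is this tail estimate: turning the heuristic ``vanishing moments $\Rightarrow$ frequency localization $\Rightarrow$ extension decay'' into a quantitative bound uniform in $s$, $r$, and the $\nu$-disjoint triple, with decay strong enough to survive the summations. Once the tail is controlled, the remainder is a geometric-series bookkeeping exercise, followed by an appeal to Theorem \ref{main} to lift the disjoint trilinear inequality to the full Fourier extension conjecture.
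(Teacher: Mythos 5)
Your overall architecture agrees with the paper's: decompose into dyadic annuli, expand each $f_{k}$ into single-scale smooth Alpert pseudoprojections, treat the on-scale triples ($s_{2},s_{3}$ in the window $\left( \frac{r}{1+\delta },\frac{r}{1-\delta }\right) $, $s_{1}\leq s_{2}$) by the hypothesis (\ref{annular}) plus polynomial counting, and lift the resulting $\mathcal{E}_{\limfunc{disj}\nu }\left( \otimes _{3}L^{\infty }\rightarrow L^{\frac{q}{3}};\varepsilon \right) $ to (\ref{FEC}) via Theorem \ref{main}. Your forward direction (H\"{o}lder plus (\ref{square est})) is also fine, and is the easy direction the paper leaves implicit. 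The high tail $s_{3}>\frac{r}{1-\delta }$ is likewise handled in the paper exactly as you suggest, by $\kappa $-moment vanishing giving a factor $\left( 2^{-s_{3}}\left\vert \xi \right\vert \right) ^{\kappa }\lesssim 2^{-\kappa \delta s_{3}}$.

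The genuine gap is your treatment of the low tail $s_{1}\leq s_{2}\leq \frac{r}{1+\delta }$. Your claim that vanishing moments and non-stationary phase yield \emph{rapid} decay of a \emph{single} factor $\left\vert \mathcal{E}\mathsf{Q}_{s,U_{k}}^{\eta }f_{k}\left( \xi \right) \right\vert $ whenever $s$ is below the window is false: for $\xi $ in the direction of a normal to $\Phi \left( U_{k}\right) $ the phase $x\mapsto \xi \cdot \Phi \left( x\right) $ has an interior stationary point, and stationary phase gives only the curvature decay $\left\vert \xi \right\vert ^{-1}\approx 2^{-r}$, with leading coefficient the pointwise value of the density at the stationary point --- which vanishing moments do not diminish, so no power of $\kappa $ (or $N$) appears. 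With only this, the low-tail contribution is at best $2^{\frac{9}{q}r}\cdot 2^{-2r}$ (two coarse factors at $2^{-r}$ each, the third factor bounded trivially), which diverges for $3<q<\frac{9}{2}$, precisely the interesting range, and the sum over small $s_{1},s_{2}$ only makes matters worse. The paper's resolution is intrinsically bilinear and is where the $\nu $-disjointness enters the tails: Lemma \ref{scales} shows that the convolution $\Phi _{\ast }\bigtriangleup _{I_{1};\kappa }^{\eta }f_{1}\ast \Phi _{\ast }\bigtriangleup _{I_{2};\kappa }^{\eta }f_{2}$ of pushforwards from two $\nu $-separated patches is absolutely continuous with a density smooth at scale $2^{-s_{2}}$ (the separation $\left\vert v-u\right\vert \geq \nu $ makes the relevant map a submersion, via the implicit function theorem), whence the \emph{product} $\mathcal{E}\mathsf{Q}_{s_{1},U_{1}}^{\eta }f_{1}\cdot \mathcal{E}\mathsf{Q}_{s_{2},U_{2}}^{\eta }f_{2}$, being the Fourier transform of that density, decays like $\left( 2^{s_{2}}/2^{r}\right) ^{N}$ for every $N$ on $A\left( 0,2^{r}\right) $. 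Without this convolution lemma (or an equivalent bilinear/transversal substitute, e.g. tangential integration by parts exploiting the angle $\gtrsim \nu $ between $\xi $ and the normals of one patch), your off-scale bookkeeping cannot close.
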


\begin{description}
\item[Note] Integration on the left hand side of (\ref{annular}) is taken over
the annulus $A\left(  0,2^{s}\right)  $ rather than the ball $B\left(
0,2^{s}\right)  $. Thus one should think of the $\mathcal{A}$ in
$\mathcal{A}_{\mathop{\rm disj}\nu}^{\kappa}\left(  \otimes_{3}L^{p}%
\rightarrow L^{\frac{q}{3}};\varepsilon\right)  $ as standing for `annulus' as
well as `Alpert'.
\end{description}

Note that the inequality (\ref{annular}) lies in the fully resonant spectrum,
since
\[
\mathcal{E}\bigtriangleup_{I_{k};\kappa}^{\eta}f_{k}\left(  \xi\right)
=\sum_{I_{k}\in\mathcal{G}_{s}\left[  U_{k}\right]  }\widehat{f_{k}}\left(
I_{k}\right)  \int_{\mathbb{R}^{2}}e^{-i\Phi\left(  x\right)  \cdot\xi
}h_{I_{k};\kappa}^{\eta}\left(  x\right)  dx\ ,
\]
where the wavelength of the oscillatory factor $e^{-i\Phi\left(  x\right)
\cdot\xi}$ on the support of $h_{I_{k};\kappa}^{\eta}$ is roughly $\frac
{1}{\left\vert \xi\right\vert }\in\left(  2^{-\left(  1+\delta\right)
s},2^{-\left(  1-\delta\right)  s}\right)  $, and the side length of $I_{k}%
\,$is $2^{-s}$.

The smooth Alpert pseudoprojection $\mathsf{Q}_{s,U_{k}}^{\eta}=\sum
_{I\in\mathcal{G}_{s}\left[  U_{k}\right]  }\bigtriangleup_{I;\kappa}^{\eta}%
$at level $s\in\mathbb{N}$ was introduced in \cite{Saw7}, where a
probabilistic analogue of the Fourier extension conjecture was proved, but
unlike the probabilistic analysis in \cite{Saw7}, which included a wide range
of fully resonant inner products (from $2^{s}\,$to $2^{2s}$), the range of
resonance in the disjoint smooth Alpert trilinear inequality $\mathcal{A}%
_{\mathop{\rm
disj}\nu}^{\kappa,\delta}\left(  \otimes_{3}L^{q}\rightarrow L^{\frac{q}{3}%
};\varepsilon\right)  $ is greatly reduced due to the convolution of disjoint
patches on the sphere. It is crucial that we do not need to obtain estimates
on how the constants depend on transversality in the trilinear result,
something that is enabled by the argument\ of Bourgain and Guth \cite{BoGu}.

The disjoint \emph{smooth Alpert }trilinear inequality $\mathcal{A}%
_{\mathop{\rm disj}\nu}^{\kappa,\delta}\left(  \otimes L^{\infty}\rightarrow
L^{\frac{q}{3}};\varepsilon\right)  $ represents the weakest formulation of
the Fourier extension conjecture that the authors could find to date,
requiring only smooth Alpert projections of bounded functions with the two
largest scales near $s$ in a disjoint trilinear inequality, integration over
the corresponding `small' annulus $A\left(  0,2^{s}\right)  $ of resonance,
and permitting the familiar small $\varepsilon$-power growth in $s$.

At the end of the paper we show how the \emph{probabilistic} Fourier extension
theorem in \cite{Saw7} can be proved using a square function modification of
the arguments in this paper, providing a new and arguably simpler proof.

\subsection{Convolution of $\nu$-disjoint singular measures on the paraboloid}

Let $\mu^{1}\equiv\Phi_{\ast}\mathsf{Q}_{s_{1},U_{1}}^{\eta}f_{1}$ and
$\mu^{2}\equiv\Phi_{\ast}\mathsf{Q}_{s_{2},U_{2}}^{\eta}f_{2}$ denote singular
measures on the paraboloid, that are pushforwards of smooth Alpert projections
at levels $s_{1}<s_{2}$ of functions $f_{k}\in L^{p}\left(  U_{k}\right)  $,
$1<p<\infty$, and where $\mathop{\rm diam}\left(  U_{1}\right)  \approx
\mathop{\rm diam}\left(  U_{2}\right)  \approx\mathop{\rm
dist}\left(  U_{1},U_{2}\right)  \gtrsim\nu>0$. For $z\in\mathbb{R}^{3}$,
denote by $\omega_{z}$ the translate of a measure $\omega$ by $z$. We use
duality to compute the convolution $\mu^{1}\ast\mu^{2}$ in terms of the
measure-valued integral $\iiint_{w\in\mathbb{R}^{3}}\left[  \mu_{w}^{1}\left(
\cdot\right)  \right]  d\mu^{2}\left(  w\right)  $ as follows. For $F$ a
continuous function on $\mathbb{R}^{3}$, write
\begin{align*}
&  \left\langle F,\Phi_{\ast}\mathsf{Q}_{s_{1},U_{1}}^{\eta}f_{1}\ast
\Phi_{\ast}\mathsf{Q}_{s_{2},U_{2}}^{\eta}f_{2}\right\rangle =\left\langle
F,\mu^{1}\ast\mu^{2}\right\rangle =\left\langle F\left(  \cdot\right)
,\iiint_{w\in\mathbb{R}^{3}}\mu_{w}^{1}\left(  \cdot\right)  d\mu^{2}\left(
w\right)  \right\rangle \\
&  =\iiint_{z\in\mathbb{R}^{3}}F\left(  z\right)  d\left[  \iiint
_{w\in\mathbb{R}^{3}}\mu_{w}^{1}\left(  z\right)  d\mu^{2}\left(  w\right)
\right]  =\iiint_{w\in\mathbb{R}^{3}}\left\{  \iiint_{z\in\mathbb{R}^{3}%
}F\left(  z\right)  d\mu_{w}^{1}\left(  z\right)  \right\}  d\mu^{2}\left(
w\right) \\
&  =\iiint_{w\in\mathbb{R}^{3}}\left\{  \iiint_{z\in\mathbb{R}^{3}}F\left(
z-w\right)  d\mu^{1}\left(  z\right)  \right\}  d\mu^{2}\left(  w\right)  ,
\end{align*}
and using the definitions of $\mu^{1}$ and $\mu^{2}$ as pushforwards
respectively of $\mathsf{Q}_{s_{1},U_{1}}^{\eta}f_{1}\left(  v\right)  dv$ and
$\mathsf{Q}_{s_{2},U_{2}}^{\eta}f_{2}\left(  u\right)  du$ by $\Phi$, we see
that%
\begin{align*}
\left\langle F,\mu^{1}\ast\mu^{2}\right\rangle  &  =\iiint_{w\in\mathbb{R}%
^{3}}\left\{  \iiint_{z\in\mathbb{R}^{3}}F\left(  z-w\right)  d\mu^{1}\left(
z\right)  \right\}  d\mu^{2}\left(  w\right) \\
&  =\iiint_{w\in\mathbb{R}^{3}}\left\{  \iint_{v\in U_{1}}F\left(  \Phi\left(
v\right)  -w\right)  \mathsf{Q}_{s_{1},U_{1}}^{\eta}f_{1}\left(  v\right)
dv\right\}  d\mu^{2}\left(  w\right) \\
&  =\iint_{u\in U_{2}}\iint_{v\in U_{1}}F\left(  \Phi\left(  v\right)
-\Phi\left(  u\right)  \right)  \mathsf{Q}_{s_{1},U_{1}}^{\eta}f_{1}\left(
v\right)  dv\mathsf{Q}_{s_{2},U_{2}}^{\eta}f_{2}\left(  u\right)  du.
\end{align*}

Taking limits we can let $F=\delta_{a}$, so that for $\left(  v,u\right)  \in
U_{1}\times U_{2}$,%
\begin{align}
\left\langle \delta_{a},\mu^{1}\ast\mu^{2}\right\rangle  &  =\iiiint_{\left(
v,u\right)  \in U_{1}\times U_{2}}\delta_{a}\left(  \Phi\left(  v\right)
-\Phi\left(  u\right)  \right)  \mathsf{Q}_{s_{1},U_{1}}^{\eta}f_{1}\left(
v\right)  dv\mathsf{Q}_{s_{2},U_{2}}^{\eta}f_{2}\left(  u\right)
du\label{beta}\\
&  =\iiiint_{E_{a}}\mathsf{Q}_{s_{1},U_{1}}^{\eta}f_{1}\left(  v\right)
\mathsf{Q}_{s_{2},U_{2}}^{\eta}f_{2}\left(  u\right)  \beta_{a}\left(
u,v\right)  dvdu,\nonumber
\end{align}
where $E_{a}$ is a line in $U_{1}\times U_{2}$ given by%
\begin{align*}
E_{a}  &  \equiv\left\{  \left(  v,u\right)  \in U_{1}\times U_{2}%
:v-u=a^{\prime}\text{ and }\left\vert v\right\vert ^{2}-\left\vert
u\right\vert ^{2}=a_{3}\right\} \\
&  =\left\{  \left(  u+a^{\prime},u\right)  :\left\vert u+a^{\prime
}\right\vert ^{2}-\left\vert u\right\vert ^{2}=a_{3}\right\}  =\left\{
\left(  u+a^{\prime},u\right)  :2a^{\prime}\cdot u=a_{3}-\left\vert a^{\prime
}\right\vert ^{2}\right\} \\
&  =\left\{  \left(  v,v-a^{\prime}\right)  :\left\vert v\right\vert
^{2}-\left\vert v-a^{\prime}\right\vert ^{2}=a_{3}\right\}  =\left\{  \left(
v,v-a^{\prime}\right)  :2a^{\prime}\cdot v=a_{3}+\left\vert a^{\prime
}\right\vert ^{2}\right\}  ,
\end{align*}
and $\beta_{a}\left(  u,v\right)  $ is the smooth density arising from the
limiting passage from $F$ to $\delta_{a}$. This can be seen from an
application of the implicit function theorem using $\left\vert a^{\prime
}\right\vert =\left\vert v-u\right\vert \geq\nu>0$ by the $\nu$-disjoint
assumption (or by direct calculation). We further conclude from the implicit
function theorem that%
\begin{equation}
a\rightarrow\left\langle \delta_{a},\mu^{1}\ast\mu^{2}\right\rangle \text{ is
a smooth function of }a\text{ which is \emph{adapted to scale }}\frac{1}{\nu
}\min\left\{  2^{-s_{1}},2^{-s_{2}}\right\}  =\frac{1}{\nu}2^{-s_{2}%
},\label{ift adapted}%
\end{equation}
where by adapted to scale $\delta>0$, we mean that $m^{th}$ order derivatives
are bounded by $C_{m}\left(  \frac{1}{\delta}\right)  ^{m}$. Moreover, the
density `$\mu^{1}\ast\mu^{2}\left(  a\right)  $' of the absolutely continuous
measure $\mu^{1}\ast\mu^{2}=\left(  \mu^{1}\ast\mu^{2}\right)  \left(
a\right)  da$ is given by%
\begin{equation}
\mu^{1}\ast\mu^{2}\left(  a\right)  =\left\langle \delta_{a},\mu^{1}\ast
\mu^{2}\right\rangle .\label{density}%
\end{equation}

At this point we note the crude inequality%
\begin{align*}
\left\vert \widehat{f}\left(  I\right)  \right\vert  &  =\left\vert
\left\langle \left(  S_{\kappa,\eta}^{\mathcal{G}}\right)  ^{-1}f,h_{I;\kappa
}\right\rangle \right\vert \leq\left\Vert \left(  S_{\kappa,\eta}%
^{\mathcal{G}}\right)  ^{-1}f\right\Vert _{L^{p}}\left\Vert h_{I;\kappa
}\right\Vert _{L^{p^{\prime}}}\\
&  \lesssim\left\Vert \left(  S_{\kappa,\eta}^{\mathcal{G}}\right)
^{-1}\right\Vert _{L^{p}\rightarrow L^{p}}\left\Vert f\right\Vert
_{L^{p}\left(  U\right)  }\left\Vert h_{I;\kappa}\right\Vert _{L^{p}%
}\left\Vert \mathbf{1}_{I}\right\Vert _{L^{p^{\prime}}}\lesssim\left\Vert
f\right\Vert _{L^{p}\left(  U\right)  }\ell\left(  I\right)  ^{-1}\ell\left(
I\right)  ^{\frac{2}{p^{\prime}}}=\ell\left(  I\right)  ^{\frac{1}{p^{\prime}%
}-\frac{1}{p}}\left\Vert f\right\Vert _{L^{p}}\ ,
\end{align*}
for any $1<p<\infty$, which gives%
\begin{equation}
\left\vert \bigtriangleup_{I;\kappa}^{\eta}f\right\vert =\left\vert
\widehat{f}\left(  I\right)  h_{I;\kappa}^{\eta}\right\vert \lesssim
\ell\left(  I\right)  ^{-\left(  \frac{1}{p}-\frac{1}{p^{\prime}}\right)
}\left\Vert f\right\Vert _{L^{p}}\ell\left(  I\right)  ^{-1}\mathbf{1}%
_{\left(  1+\eta\ell\left(  I\right)  \right)  I}=\ell\left(  I\right)
^{-\frac{2}{p}}\left\Vert f\right\Vert _{L^{p}}\mathbf{1}_{\left(  1+\eta
\ell\left(  I\right)  \right)  I},\ \ \ \ \ 1<p<\infty.\label{precrude}%
\end{equation}

Altogether (\ref{ift adapted}), (\ref{density}) and (\ref{precrude}) prove the
following lemma.

\begin{lemma}
\label{scales}For $\mu^{1}$ and $\mu^{2}$ as above, i.e. $\mu^{1}\equiv
\Phi_{\ast}\mathsf{Q}_{s_{1},U_{1}}^{\eta}f_{1}$ and $\mu^{2}\equiv\Phi_{\ast
}\mathsf{Q}_{s_{2},U_{2}}^{\eta}f_{2}$ where $s_{1}<s_{2}$ and $f_{k}\in
L^{p}\left(  U_{k}\right)  $, $1<p<\infty$, and $U_{1}$ and $U_{2}$ are $\nu
$-separated. Then the following derivative estimates hold,%
\begin{align*}
\left\vert \nabla_{a}^{m}\left(  \mu^{1}\ast\mu^{2}\right)  \left(  a\right)
\right\vert  &  \lesssim C_{m}\frac{1}{\nu^{m}}2^{ms_{2}}2^{\frac{2}{p}\left(
s_{1}+s_{2}\right)  }\left\Vert f_{1}\right\Vert _{L^{p}\left(  U_{1}\right)
}\left\Vert f_{2}\right\Vert _{L^{p}\left(  U_{2}\right)  },\\
\text{for all }m  &  \geq0\text{, }1<p<\infty\text{, and }a\in
\mathop{\rm Supp}\left(  \mu^{1}\ast\mu^{2}\right)  .
\end{align*}

\end{lemma}

Using Lemma \ref{scales} for $\nu$-disjoint singular measures on the
paraboloid, we can now prove Theorem \ref{main Alpert}. Indeed, we will
exploit the smoothness and moment vanishing properties of smooth Alpert
wavelets to obtain geometric decay in\ convolutions of singular measures
supported on the paraboloid.

\begin{proof}
[Proof of Theorem \ref{main Alpert}]Let $0<\delta<1$ and $\kappa>\frac
{20}{\delta}$ as in the statement of Theorem \ref{main Alpert}. Without loss
of generality we assume that $s_{1}<s_{2}<s_{3}$ and $R=2^{r}$, and we will
repeatedly use the inequalities (\ref{ift adapted}), (\ref{density}) and
(\ref{precrude}) to prove Theorem \ref{main Alpert} in three cases, followed
by a wrapup.
\end{proof}

\subsection{Case 1: $s_{3}$ is large}

\begin{proof}
[Proof continued]First suppose that $s_{3}>\frac{r}{1-\delta}$. Then for
$\left\vert \xi\right\vert \approx2^{r}$, we have upon using $\kappa$-moment
vanishing of $\bigtriangleup_{I_{3};\kappa}^{\eta}f_{3}$ and differentiating
\[
\exp_{\Phi,\xi}\left(  x\right)  \equiv e^{-i\Phi\left(  x\right)  \cdot\xi
}=e^{-i\Phi\left(  c_{I_{3}}\right)  \cdot\xi}e^{-i\left[  \Phi\left(
x\right)  -\Phi\left(  c_{I_{3}}\right)  \right]  \cdot\xi}%
\]
with respect to $x$, together with (\ref{precrude}),
\begin{align*}
&  \left\vert \mathcal{E}\mathsf{Q}_{s_{3},U_{3}}^{\eta}f_{3}\left(
\xi\right)  \right\vert =\left\vert \left[  \Phi_{\ast}\bigtriangleup
_{I_{3};\kappa}^{\eta}f_{3}\right]  ^{\wedge}\left(  \xi\right)  \right\vert
=\left\vert \int e^{-iz\cdot\xi}\Phi_{\ast}\bigtriangleup_{I_{3};\kappa}%
^{\eta}f_{3}\left(  z\right)  dz\right\vert =\left\vert \int e^{-i\Phi\left(
x\right)  \cdot\xi}\bigtriangleup_{I_{3};\kappa}^{\eta}f_{3}\left(  x\right)
dx\right\vert \\
&  =\left\vert \int\left\{  e^{-i\Phi\left(  x\right)  \cdot\xi}-\sum
_{k=1}^{\kappa-1}\frac{\left[  \left(  x-c_{I_{3}}\right)  \cdot\nabla\right]
^{k}}{k!}\exp_{\Phi,\xi}\left(  c_{I_{3}}\right)  \right\}  \bigtriangleup
_{I_{3};\kappa}^{\eta}f_{3}\left(  x\right)  dx\right\vert \\
&  =\left\vert \int\left\{  \frac{\left[  \left(  x-c_{I_{3}}\right)
\cdot\nabla\right]  ^{\kappa}}{\kappa!}\exp_{\Phi,\xi}\left(  \theta_{I_{3}%
}\right)  \right\}  \bigtriangleup_{I_{3};\kappa}^{\eta}f_{3}\left(  x\right)
dx\right\vert \\
&  \lesssim C_{\kappa}\int\left\vert 2^{-s_{3}}\xi\right\vert ^{\kappa
}\left\vert \bigtriangleup_{I_{3};\kappa}^{\eta}f_{3}\left(  x\right)
\right\vert dx\lesssim C_{\kappa}\int\left(  2^{-s_{3}}\left\vert
\xi\right\vert \right)  ^{\kappa}2^{s_{3}\frac{2}{p}}\left\Vert f_{3}%
\right\Vert _{L^{p}}\mathbf{1}_{\left(  1+\eta\ell\left(  I_{3}\right)
\right)  I_{3}}dx\\
&  \leq C_{\kappa}2^{-\left(  \kappa-\frac{2}{p}\right)  s_{3}}2^{\kappa
r}\left\Vert f_{3}\right\Vert _{L^{p}}=C_{\kappa}2^{-\kappa\left(
s_{3}-r\right)  }2^{\frac{2}{p}s_{3}}\left\Vert f_{3}\right\Vert _{L^{p}},
\end{align*}
and so%
\begin{align*}
&  \left(  \int_{A\left(  0,2^{r}\right)  }\left(  \left\vert \mathcal{E}%
\mathsf{Q}_{s_{1},U_{1}}^{\eta}f_{1}\left(  \xi\right)  \right\vert
\ \left\vert \mathcal{E}\mathsf{Q}_{s_{2},U_{2}}^{\eta}f_{2}\left(
\xi\right)  \right\vert \ \left\vert \mathcal{E}\mathsf{Q}_{s_{3},U_{3}}%
^{\eta}f_{3}\left(  \xi\right)  \right\vert \right)  ^{\frac{q}{3}}%
\ d\xi\right)  ^{\frac{3}{q}}\\
&  \lesssim\left(  \int_{A\left(  0,2^{r}\right)  }\left(  \left\Vert
f_{1}\right\Vert _{L^{1}}\left\Vert f_{2}\right\Vert _{L^{1}}C_{\kappa
}2^{-\kappa\left(  s_{3}-r\right)  }2^{\frac{2}{p}s_{3}}\left\Vert
f_{3}\right\Vert _{L^{p}}\right)  ^{\frac{q}{3}}\ d\xi\right)  ^{\frac{3}{q}%
}\\
&  \lesssim C_{\kappa}2^{-\kappa\left(  s_{3}-r\right)  }2^{\frac{2}{p}s_{3}%
}\left(  \int_{A\left(  0,2^{r}\right)  }d\xi\right)  ^{\frac{3}{q}}\left\Vert
f_{1}\right\Vert _{L^{1}}\left\Vert f_{2}\right\Vert _{L^{1}}\left\Vert
f_{3}\right\Vert _{L^{p}}\\
&  \lesssim C_{\kappa}2^{-\kappa\left(  s_{3}-r\right)  }2^{\frac{2}{p}s_{3}%
}2^{\frac{9}{q}r}\left\Vert f_{1}\right\Vert _{L^{p}}\left\Vert f_{2}%
\right\Vert _{L^{p}}\left\Vert f_{3}\right\Vert _{L^{p}}\\
&  \leq C_{\kappa}2^{-\kappa\left(  s_{3}-\left(  1-\delta\right)
s_{3}\right)  }2^{\frac{2}{p}s_{3}}2^{\frac{9}{q}\left(  1-\delta\right)
s_{3}}\left\Vert f_{1}\right\Vert _{L^{p}}\left\Vert f_{2}\right\Vert _{L^{p}%
}\left\Vert f_{3}\right\Vert _{L^{p}}\\
&  =C_{\kappa}2^{-\kappa\delta s_{3}}2^{\left(  \frac{9}{q}\left(
1-\delta\right)  +\frac{2}{p}\right)  s_{3}}\left\Vert f_{1}\right\Vert
_{L^{p}}\left\Vert f_{2}\right\Vert _{L^{p}}\left\Vert f_{3}\right\Vert
_{L^{p}}\ .
\end{align*}

Since $\kappa>\frac{20}{\delta}$, $q>3$ and $p>1$, we have $\kappa
\delta-\left(  \frac{9}{q}\left(  1-\delta\right)  +\frac{2}{p}\right)
\geq15>1$. Then summing in $s_{3}>\frac{r}{1-\delta}$ and using $q>3$ in
Minkowski's inequality gives
\begin{align}
&  \left(  \int_{A\left(  0,2^{r}\right)  }\left(  \sum_{s_{1}\leq s_{2}\leq
s_{3}\text{ and }s_{3}>\frac{r}{1-\delta}}\left\vert \mathcal{E}%
\mathsf{Q}_{s_{1},U_{1}}^{\eta}f_{1}\left(  \xi\right)  \right\vert
\ \left\vert \mathcal{E}\mathsf{Q}_{s_{2},U_{2}}^{\eta}f_{2}\left(
\xi\right)  \right\vert \ \left\vert \mathcal{E}\mathsf{Q}_{s_{3},U_{3}}%
^{\eta}f_{3}\left(  \xi\right)  \right\vert \right)  ^{\frac{q}{3}}%
\ d\xi\right)  ^{\frac{3}{q}}\label{first half}\\
&  \lesssim\sum_{s_{1}\leq s_{2}\leq s_{3}\text{ and }s_{3}>\frac{r}{1-\delta
}}\left(  \int_{A\left(  0,2^{r}\right)  }\left(  \left\vert \mathcal{E}%
\mathsf{Q}_{s_{1},U_{1}}^{\eta}f_{1}\left(  \xi\right)  \right\vert
\ \left\vert \mathcal{E}\mathsf{Q}_{s_{2},U_{2}}^{\eta}f_{2}\left(
\xi\right)  \right\vert \ \left\vert \mathcal{E}\mathsf{Q}_{s_{3},U_{3}}%
^{\eta}f_{3}\left(  \xi\right)  \right\vert \right)  ^{\frac{q}{3}}%
\ d\xi\right)  ^{\frac{3}{q}}\nonumber\\
&  \lesssim C_{\kappa}\sum_{s_{3}>\frac{r}{1-\delta}}s_{3}^{2}2^{-s_{3}%
}\left\Vert f_{1}\right\Vert _{L^{p}}\left\Vert f_{2}\right\Vert _{L^{p}%
}\left\Vert f_{3}\right\Vert _{L^{p}}\lesssim C_{\kappa}r^{2}2^{-\frac
{r}{1-\delta}}\left\Vert f_{1}\right\Vert _{L^{p}}\left\Vert f_{2}\right\Vert
_{L^{p}}\left\Vert f_{3}\right\Vert _{L^{p}}.\nonumber
\end{align}

\end{proof}

\subsection{Case 2: $s_{2}$ is small}

\begin{proof}
[Proof continued]Next we suppose $s_{1}<s_{2}\leq\frac{r}{1+\delta}$ and
$s_{3}\leq\frac{r}{1-\delta}$, and $I_{1}\in\mathcal{G}_{s_{1}}\left[
U_{1}\right]  $ and $I_{2}\in\mathcal{G}_{s_{2}}\left[  U_{2}\right]  $. Then
from Lemma \ref{scales}, we obtain that%
\[
F_{I_{1},I_{2}}\equiv\Phi_{\ast}\bigtriangleup_{I_{1};\kappa}^{\eta}f\ast
\Phi_{\ast}\bigtriangleup_{I_{2};\kappa}^{\eta}f\left(  z\right)
\]
is compactly supported in the three dimensional rectangle $2\left(
\Phi\left(  I_{1}\right)  +\Phi\left(  I_{2}\right)  \right)  $, and smoothly
adapted to scale $\nu2^{-s_{2}}$. Note that the smoothness scale from Lemma
\ref{scales} is better than that obtained from the usual localization
$\mathbf{1}_{A\left(  0,2^{r}\right)  }\leq\widehat{\varphi_{2^{-r}}}$, which
is just $2^{-r}$\footnote{This smoothness $2^{-r}$ arising from localization
is what was used in \cite{Saw7}, and the sharper smoothness $\nu2^{-s_{2}}$
arising from $\nu$-disjointness used here is key to our alternate approach to
the probabilistic Fourier extension theorem below.}. Thus for $\xi\in A\left(
0,2^{r}\right)  $, we obtain from (\ref{precrude}) that%
\begin{align*}
&  \left\vert \left[  \Phi_{\ast}\bigtriangleup_{I_{1};\kappa}^{\eta}f\ast
\Phi_{\ast}\bigtriangleup_{I_{2};\kappa}^{\eta}f\right]  ^{\wedge}\left(
\xi\right)  \right\vert =\left\vert \int e^{-iz\cdot\xi}F_{I_{1},I_{2}}\left(
z\right)  dz\right\vert \leq C_{N}\left(  \frac{1}{\left\vert \xi\right\vert
}\right)  ^{N}\int\left\vert \nabla^{N}F_{I_{1},I_{2}}\right\vert \left(
z\right)  dz\\
&  \leq C_{N}\left(  \frac{2^{s_{2}}}{\nu\left\vert \xi\right\vert }\right)
^{N}\left\vert \Phi\left(  I_{1}\right)  +\Phi\left(  I_{2}\right)
\right\vert \ell\left(  I_{1}\right)  ^{-\frac{2}{p}}\left\Vert f_{1}%
\right\Vert _{L^{p}}\ell\left(  I_{2}\right)  ^{-\frac{2}{p}}\left\Vert
f_{2}\right\Vert _{L^{p}}\\
&  \approx C_{N}\nu\left(  \frac{2^{s_{2}}}{\nu2^{r}}\right)  ^{N}%
2^{-2s_{1}-s_{2}}2^{\frac{2}{p}s_{1}}2^{\frac{2}{p}s_{2}}\left\Vert
f_{1}\right\Vert _{L^{p}}\left\Vert f_{2}\right\Vert _{L^{p}}\ ,
\end{align*}
since $\Phi\left(  I_{1}\right)  +\Phi\left(  I_{2}\right)  $ is roughly a
three dimensional rectangle of dimensions $2^{-s_{1}}\times2^{-s_{1}}%
\times\left(  \sin\nu\right)  2^{-s_{2}}$.

As a consequence we have%
\begin{align*}
&  \left(  \int_{A\left(  0,R\right)  }\left(  \left\vert \mathcal{E}%
\mathsf{Q}_{s_{1},U_{1}}^{\eta}f_{1}\left(  \xi\right)  \right\vert
\ \left\vert \mathcal{E}\mathsf{Q}_{s_{2},U_{2}}^{\eta}f_{2}\left(
\xi\right)  \right\vert \ \left\vert \mathcal{E}\mathsf{Q}_{s_{3},U_{3}}%
^{\eta}f_{3}\left(  \xi\right)  \right\vert \right)  ^{\frac{q}{3}}%
d\xi\right)  ^{\frac{3}{q}}\\
&  =\left(  \int_{A\left(  0,2^{r}\right)  }\left(  \left\vert \left[
\Phi_{\ast}\mathsf{Q}_{s_{1},U_{1}}^{\eta}f\ast\Phi_{\ast}\mathsf{Q}%
_{s_{2},U_{2}}^{\eta}f\right]  ^{\wedge}\left(  \xi\right)  \right\vert
\ \left\vert \widehat{\Phi_{\ast}\mathsf{Q}_{s_{3},U_{3}}^{\eta}f}\left(
\xi\right)  \right\vert \right)  ^{\frac{q}{3}}d\xi\right)  ^{\frac{3}{q}}\\
&  \leq\sum_{I_{1}\in\mathcal{G}_{s_{1}}\left[  U_{1}\right]  }\sum_{I_{2}%
\in\mathcal{G}_{s_{2}}\left[  U_{2}\right]  }\left(  \int_{A\left(
0,2^{r}\right)  }\left(  \left\vert \left[  \Phi_{\ast}\bigtriangleup
_{I_{1};\kappa}^{\eta}f\ast\Phi_{\ast}\bigtriangleup_{I_{2};\kappa}^{\eta
}f\right]  ^{\wedge}\left(  \xi\right)  \right\vert \ \left\vert \widehat
{\Phi_{\ast}\mathsf{Q}_{s_{3},U_{3}}^{\eta}f}\left(  \xi\right)  \right\vert
\right)  ^{\frac{q}{3}}d\xi\right)  ^{\frac{3}{q}}\\
&  \leq\sum_{I_{1}\in\mathcal{G}_{s_{1}}\left[  U_{1}\right]  }\sum_{I_{2}%
\in\mathcal{G}_{s_{2}}\left[  U_{2}\right]  }\left(  \int_{A\left(
0,2^{r}\right)  }\left(  C_{N}\nu\left(  \frac{2^{s_{2}}}{\nu2^{r}}\right)
^{N}2^{-2s_{1}-s_{2}}2^{\frac{2}{p}s_{1}}2^{\frac{2}{p}s_{2}}\left\Vert
f_{1}\right\Vert _{L^{p}}\left\Vert f_{2}\right\Vert _{L^{p}}\left\Vert
f_{3}\right\Vert _{L^{1}}\right)  ^{\frac{q}{3}}d\xi\right)  ^{\frac{3}{q}},
\end{align*}
which is approximately,%
\begin{align*}
&  C_{N}\nu^{1-N}2^{2s_{1}}2^{2s_{2}}\left(  \frac{2^{s_{2}}}{2^{r}}\right)
^{N}2^{-2s_{1}-s_{2}}2^{\frac{2}{p}s_{1}}2^{\frac{2}{p}s_{2}}\left\Vert
f_{1}\right\Vert _{L^{p}}\left\Vert f_{2}\right\Vert _{L^{p}}\left\Vert
f_{3}\right\Vert _{L^{1}}2^{r\frac{9}{q}}\\
&  \lesssim C_{N}\nu^{1-N}\left(  \frac{2^{s_{2}}}{2^{r}}\right)  ^{N}%
2^{s_{2}}2^{\frac{2}{p}s_{1}}2^{\frac{2}{p}s_{2}}\left\Vert f_{1}\right\Vert
_{L^{p}}\left\Vert f_{2}\right\Vert _{L^{p}}\left\Vert f_{3}\right\Vert
_{L^{p}}2^{r\frac{9}{q}}\\
&  \leq C_{N}\nu^{1-N}2^{\left(  \frac{9}{q}-N\right)  r}2^{\left(
N+1+\frac{4}{p}\right)  s_{2}}\left\Vert f_{1}\right\Vert _{L^{p}}\left\Vert
f_{2}\right\Vert _{L^{p}}\left\Vert f_{3}\right\Vert _{L^{p}}\\
&  <C_{N}\nu^{1-N}2^{\left(  \frac{9}{q}-N\right)  r}2^{\left(  N+1+\frac
{4}{p}\right)  \frac{r}{1+\delta}}\left\Vert f_{1}\right\Vert _{L^{p}%
}\left\Vert f_{2}\right\Vert _{L^{p}}\left\Vert f_{3}\right\Vert _{L^{p}}\ .
\end{align*}
If we choose $\frac{N+1+\frac{4}{p}}{1+\delta}+\frac{9}{q}-N<-1$, then
$2^{\left(  \frac{9}{q}-N\right)  r}2^{\left(  N+1+\frac{4}{p}\right)
\frac{r}{1+\delta}}<2^{-r}$ and so%
\[
\sum_{s_{1},s_{2}=1}^{\frac{r}{1+\delta}}\left(  \int_{A\left(  0,R\right)
}\left(  \left\vert \mathcal{E}\mathsf{Q}_{s_{1},U_{1}}^{\eta}f_{1}\left(
\xi\right)  \right\vert \ \left\vert \mathcal{E}\mathsf{Q}_{s_{2},U_{2}}%
^{\eta}f_{2}\left(  \xi\right)  \right\vert \ \left\vert \mathcal{E}%
\mathsf{Q}_{s_{3},U_{3}}^{\eta}f_{3}\left(  \xi\right)  \right\vert \right)
^{\frac{q}{3}}d\xi\right)  ^{\frac{3}{q}}\lesssim C_{N}\nu^{1-N}r^{2}%
2^{-r}\left\Vert f_{1}\right\Vert _{L^{p}}\left\Vert f_{2}\right\Vert _{L^{p}%
}\left\Vert f_{3}\right\Vert _{L^{p}},
\]
for all $s_{3}\leq\frac{r}{1-\delta}$ and for $N$ sufficiently large depending
only on $\delta$, $p$ and $q$, and in particular is independent of $s_{3}$.

Thus altogether we have%
\begin{align}
&  \left(  \int_{A\left(  0,2^{r}\right)  }\left(  \sum_{s_{1}\leq s_{2}%
\leq\frac{r}{1+\delta}\text{ and }s_{3}\leq\frac{r}{1-\delta}}\left\vert
\mathcal{E}\mathsf{Q}_{s_{1},U_{1}}^{\eta}f_{1}\left(  \xi\right)  \right\vert
\ \left\vert \mathcal{E}\mathsf{Q}_{s_{2},U_{2}}^{\eta}f_{2}\left(
\xi\right)  \right\vert \ \left\vert \mathcal{E}\mathsf{Q}_{s_{3},U_{3}}%
^{\eta}f_{3}\left(  \xi\right)  \right\vert \right)  ^{\frac{q}{3}}%
\ d\xi\right)  ^{\frac{3}{q}}\label{second half}\\
&  \leq\sum_{s_{1}\leq s_{2}\leq\frac{r}{1+\delta}\text{ and }s_{3}\leq
\frac{r}{1-\delta}}\left(  \int_{B\left(  0,2^{r}\right)  }\left(  \left\vert
\mathcal{E}\mathsf{Q}_{s_{1},U_{1}}^{\eta}f_{1}\left(  \xi\right)  \right\vert
\ \left\vert \mathcal{E}\mathsf{Q}_{s_{2},U_{2}}^{\eta}f_{2}\left(
\xi\right)  \right\vert \ \left\vert \mathcal{E}\mathsf{Q}_{s_{3},U_{3}}%
^{\eta}f_{3}\left(  \xi\right)  \right\vert \right)  ^{\frac{q}{3}}%
\ d\xi\right)  ^{\frac{3}{q}}\nonumber\\
&  \lesssim\left(  C_{N}\nu^{1-N}r^{2}2^{-r}\right)  \frac{r}{1-\delta
}\left\Vert f_{1}\right\Vert _{L^{p}}\left\Vert f_{2}\right\Vert _{L^{p}%
}\left\Vert f_{3}\right\Vert _{L^{p}}\lesssim C_{N}\nu^{1-N}r^{3}%
2^{-r}\left\Vert f_{1}\right\Vert _{L^{p}}\left\Vert f_{2}\right\Vert _{L^{p}%
}\left\Vert f_{3}\right\Vert _{L^{p}}.\nonumber
\end{align}

\end{proof}

\begin{remark}
Instead of appealing to Lemma \ref{scales} above, we could have fixed $\xi\in
A\left(  0,2^{r}\right)  $, and then chosen a patch $U_{k}$ with $1\leq
k\leq2$ such that the unit vector $\xi^{\prime}\equiv\frac{\xi}{\left\vert
\xi\right\vert }$ is at a positive angle depending on $\nu$, to the normal
vectors of $\Phi\left(  U_{k}\right)  $, and then integrated by parts along
the singular patch $\Phi\left(  U_{k}\right)  $. See e.g. \cite[Subsubsection
4.2.4]{Saw7} on tangential integration by parts for a similar calculation.
\end{remark}

\subsection{Wrapup of the proof}

\begin{proof}
[Proof continued]Now we can finish the proof. With $f_{k}\in L^{\infty}\left(
U_{k}\right)  \subset L^{p}\left(  U_{k}\right)  $ and $R=2^{r}$,
\begin{align*}
&  \left(  \int_{A\left(  0,R\right)  }\left(  \left\vert \mathcal{E}%
f_{1}\left(  \xi\right)  \right\vert \ \left\vert \mathcal{E}f_{2}\left(
\xi\right)  \right\vert \ \left\vert \mathcal{E}f_{3}\left(  \xi\right)
\right\vert \right)  ^{\frac{q}{3}}\ d\xi\right)  ^{\frac{3}{q}}\\
&  =\left(  \int_{A\left(  0,2^{r}\right)  }\left(  \left\vert \mathcal{E}%
\sum_{I_{1}\in\mathcal{G}\left[  U_{1}\right]  }\bigtriangleup_{I_{1};\kappa
}^{\eta}f_{1}\left(  \xi\right)  \right\vert \ \left\vert \mathcal{E}%
\sum_{I_{2}\in\mathcal{G}\left[  U_{2}\right]  }\bigtriangleup_{I_{2};\kappa
}^{\eta}f_{2}\left(  \xi\right)  \right\vert \ \left\vert \mathcal{E}%
\sum_{I_{3}\in\mathcal{G}\left[  U_{3}\right]  }\bigtriangleup_{I_{3};\kappa
}^{\eta}f_{3}\left(  \xi\right)  \right\vert \right)  ^{\frac{q}{3}}%
\ d\xi\right)  ^{\frac{3}{q}}\\
&  \lesssim\sum_{0\leq s_{1}\leq s_{2}\leq s_{3}}\left(  \int_{A\left(
0,2^{r}\right)  }\left(  \left\vert \mathcal{E}\mathsf{Q}_{s_{1},U_{1}}^{\eta
}f_{1}\left(  \xi\right)  \right\vert \ \left\vert \mathcal{E}\mathsf{Q}%
_{s_{2},U_{2}}^{\eta}f_{2}\left(  \xi\right)  \right\vert \ \left\vert
\mathcal{E}\mathsf{Q}_{s_{3},U_{3}}^{\eta}f_{3}\left(  \xi\right)  \right\vert
\right)  ^{\frac{q}{3}}\ d\xi\right)  ^{\frac{3}{q}},
\end{align*}
where the restriction to $I_{k}\in\mathcal{G}\left[  U_{k}\right]  $ can be
made by adapting the reduction argument in the proof of \cite[Lemma 2 on page
5]{Saw7}. Next we obtain from (\ref{first half}) and (\ref{second half}), that
the last line above is at most,%
\begin{align*}
&  \left\{  \sum_{s_{1}\leq s_{2}\leq s_{3}\text{ and }\frac{r}{1+\delta}\leq
s_{2},s_{3}\leq\frac{r}{1-\delta}}+\sum_{s_{1}\leq s_{2}\leq s_{3}\text{ and
}\frac{r}{1-\delta}\leq s_{3}}+\sum_{s_{1}\leq s_{2}\leq s_{3}\leq\frac
{r}{1-\delta}\text{ and }s_{1}\leq s_{2}\leq\frac{r}{1+\delta}}\right\} \\
&  \ \ \ \ \ \ \ \ \ \ \ \ \ \ \ \times\left(  \int_{A\left(  0,2^{r}\right)
}\left(  \left\vert \mathcal{E}\mathsf{Q}_{s_{1},U_{1}}^{\eta}f_{1}\left(
\xi\right)  \right\vert \ \left\vert \mathcal{E}\mathsf{Q}_{s_{2},U_{2}}%
^{\eta}f_{2}\left(  \xi\right)  \right\vert \ \left\vert \mathcal{E}%
\mathsf{Q}_{s_{3},U_{3}}^{\eta}f_{3}\left(  \xi\right)  \right\vert \right)
^{\frac{q}{3}}\ d\xi\right)  ^{\frac{3}{q}}\\
&  \lesssim\sum_{s_{1}\leq s_{2}\leq s_{3}\text{ and }\frac{r}{1+\delta}\leq
s_{2},s_{3}\leq\frac{r}{1-\delta}}\left(  \int_{A\left(  0,2^{r}\right)
}\left(  \left\vert \mathcal{E}\mathsf{Q}_{s_{1},U_{1}}^{\eta}f_{1}\left(
\xi\right)  \right\vert \ \left\vert \mathcal{E}\mathsf{Q}_{s_{2},U_{2}}%
^{\eta}f_{2}\left(  \xi\right)  \right\vert \ \left\vert \mathcal{E}%
\mathsf{Q}_{s_{3},U_{3}}^{\eta}f_{3}\left(  \xi\right)  \right\vert \right)
^{\frac{q}{3}}\ d\xi\right)  ^{\frac{3}{q}}\\
&  \ \ \ \ \ \ \ \ \ \ \ \ \ \ \ \ \ \ \ \ \ \ \ \ \ +C_{\kappa,\nu}\left\Vert
f_{1}\right\Vert _{L^{p}}\left\Vert f_{2}\right\Vert _{L^{p}}\left\Vert
f_{3}\right\Vert _{L^{p}}.
\end{align*}
Recall that $0<\delta<1$ and $\kappa>\frac{20}{\delta}$ are given. Now by
assumption, there is $\nu>0$ depending only on $q>3$ such that (\ref{annular})
holds with $2\delta$ and $\infty$ in place of $\delta$ and $p$ respectively,
and this then gives%
\[
\left(  \int_{A\left(  0,R\right)  }\left(  \left\vert \mathcal{E}f_{1}\left(
\xi\right)  \right\vert \ \left\vert \mathcal{E}f_{2}\left(  \xi\right)
\right\vert \ \left\vert \mathcal{E}f_{3}\left(  \xi\right)  \right\vert
\right)  ^{\frac{q}{3}}\ d\xi\right)  ^{\frac{3}{q}}\lesssim R^{\varepsilon
}\left\Vert f_{1}\right\Vert _{L^{\infty}}\left\Vert f_{2}\right\Vert
_{L^{\infty}}\left\Vert f_{3}\right\Vert _{L^{\infty}}.
\]

Finally, with $R=2^{r}$, we use that $B\left(  0,R\right)  $ is a union of the
unit ball and at most $r$ annuli of the form $A\left(  0,2^{t}\right)
=B\left(  0,2^{t}\right)  \setminus B\left(  0,2^{t-1}\right)  $, in order to
obtain,%
\begin{align*}
&  \int_{B\left(  0,R\right)  }\left(  \left\vert \mathcal{E}f_{1}\left(
\xi\right)  \right\vert \ \left\vert \mathcal{E}f_{2}\left(  \xi\right)
\right\vert \ \left\vert \mathcal{E}f_{3}\left(  \xi\right)  \right\vert
\right)  ^{\frac{q}{3}}d\xi\\
&  =\sum_{t=1}^{r}\int_{A\left(  0,2^{t}\right)  }\left(  \left\vert
\mathcal{E}f_{1}\left(  \xi\right)  \right\vert \ \left\vert \mathcal{E}%
f_{2}\left(  \xi\right)  \right\vert \ \left\vert \mathcal{E}f_{3}\left(
\xi\right)  \right\vert \right)  ^{\frac{q}{3}}d\xi+\int_{B\left(  0,1\right)
}\left(  \left\vert \mathcal{E}f_{1}\left(  \xi\right)  \right\vert
\ \left\vert \mathcal{E}f_{2}\left(  \xi\right)  \right\vert \ \left\vert
\mathcal{E}f_{3}\left(  \xi\right)  \right\vert \right)  ^{\frac{q}{3}}d\xi\\
&  \lesssim\sum_{t=1}^{r}\left(  2^{t\varepsilon}\left\Vert f_{1}\right\Vert
_{L^{\infty}}\left\Vert f_{2}\right\Vert _{L^{\infty}}\left\Vert
f_{3}\right\Vert _{L^{\infty}}\right)  ^{\frac{q}{3}}+C_{\nu}^{\frac{q}{3}%
}\left(  \left\Vert f_{1}\right\Vert _{L^{\infty}}\left\Vert f_{2}\right\Vert
_{L^{\infty}}\left\Vert f_{3}\right\Vert _{L^{\infty}}\right)  ^{\frac{q}{3}%
}\\
&  \lesssim C_{\nu}^{\frac{q}{3}}R^{\varepsilon\frac{q}{3}}\left(  \left\Vert
f_{1}\right\Vert _{L^{\infty}}\left\Vert f_{2}\right\Vert _{L^{\infty}%
}\left\Vert f_{3}\right\Vert _{L^{\infty}}\right)  ^{\frac{q}{3}%
},\ \ \ \ \ \text{for all }R\geq1,
\end{align*}
which is $\mathcal{E}_{\mathop{\rm disj}\nu}\left(  \otimes_{3}L^{\infty
}\rightarrow L^{\frac{q}{3}};\varepsilon\right)  $, where we have used
$R^{\varepsilon}+C_{\nu}\leq C_{\nu}R^{\varepsilon}$ for $R^{\varepsilon
},C_{\nu}\geq1$. By Theorem \ref{main}, this is equivalent to the Fourier
extension conjecture, and this completes the proof of Theorem
\ref{main Alpert}.
\end{proof}

\section{Appendix: Application to the probabilistic Fourier extension theorem}

Here we briefly describe an alternate, and arguably simpler, approach to
proving the three dimensional \emph{probabilistic} Fourier extension
inequality obtained in \cite{Saw7}. However, this alternate approach has
little likelihood of being extended to the Knapp segment at the boundary of
allowable exponents, something that is not out of the question for the proof
given in \cite{Saw7}.

The square function formulation of the probabilistic inequality proved in
\cite{Saw7} is,%
\begin{equation}
\left\Vert \mathcal{S}_{\mathop{\rm Fourier}}f\right\Vert _{L^{q}\left(
\lambda_{n}\right)  }\lesssim\left\Vert f\right\Vert _{L^{q}\left(  B\left(
0,\frac{1}{2}\right)  \right)  }\ ,\label{prob ext square}%
\end{equation}
where $\mathcal{S}_{\mathop{\rm Fourier}}$ is the \emph{Fourier} square
function defined by%
\begin{equation}
\mathcal{S}_{\mathop{\rm Fourier}}f\equiv\left(  \sum_{I\in\mathcal{G}\left[
U\right]  }\left\vert \mathcal{E}\mathbf{1}_{U_{0}}\bigtriangleup_{I;\kappa
}^{n-1,\eta}f\right\vert ^{2}\right)  ^{\frac{1}{2}}.\label{def T square}%
\end{equation}
Here the extension operator $\mathcal{E}$ is defined by%
\begin{equation}
\mathcal{E}f\left(  \xi\right)  \equiv\int_{B_{2}\left(  0,\frac{1}{2}\right)
}e^{-i\Phi\left(  x\right)  \cdot\xi}f\left(  x\right)  dx,\ \ \ \ \ \xi
\in\mathbb{R}^{n},\label{def T}%
\end{equation}
for $f\in L^{p}\left(  B_{2}\left(  0,\frac{1}{2}\right)  \right)  $. Thus
$\mathcal{E}f=\mathcal{F}\Phi_{\ast}\left(  f\lambda_{2}\right)
=\widehat{\Phi_{\ast}\left(  f\lambda_{2}\right)  }$, where $\Phi_{\ast}\nu$
denotes the pushforward of a measure $\nu$ under the map $\Phi$, and
$\lambda_{2}$ is Lebesgue measure in the plane.

Here we \emph{conjecture} the square function analogue of the second part of
Theorem \ref{main Alpert}.

\begin{definition}
Suppose $n=3$. Let $\varepsilon,\nu>0$, $0<\delta<1$, and $1<q<\infty$. Denote
by $\mathcal{A}_{\mathop{\rm disj}\nu}^{\kappa,\delta,\mathop{\rm
square}}\left(  \otimes_{3}L^{q}\rightarrow L^{\frac{q}{3}};\varepsilon
\right)  $ the disjoint \emph{smooth Alpert square function }trilinear
inequality%
\[
\left\Vert \mathcal{S}_{\mathop{\rm Fourier}}\mathsf{Q}_{U_{1}}^{s_{1}}%
f_{1}\ \mathcal{S}_{\mathop{\rm Fourier}}\mathsf{Q}_{U_{2}}^{s_{2}}%
f_{2}\ \mathcal{S}_{\mathop{\rm Fourier}}\mathsf{Q}_{U_{3}}^{s_{3}}%
f_{3}\right\Vert _{L^{\frac{q}{3}}\left(  A\left(  0,2^{s}\right)  \right)
}\leq C_{\varepsilon,\nu}2^{\varepsilon s}\left\Vert f_{1}\right\Vert
_{L^{\infty}\left(  U\right)  }\left\Vert f_{2}\right\Vert _{L^{\infty}\left(
U\right)  }\left\Vert f_{3}\right\Vert _{L^{\infty}\left(  U\right)  }\ ,
\]
taken over all $s_{1}\leq s_{2}$ and $s_{2},s_{3}\in\left(  \left(
1-\delta\right)  s,\left(  1+\delta\right)  s\right)  $, all $f_{k}\in
L^{q}\left(  U_{k}\right)  $, and all triples $\left(  U_{1},U_{2}%
,U_{3}\right)  \subset U^{3}$ that satisfy the weak $\nu$-disjoint condition,%
\[
\mathop{\rm diam}\left[  \Phi\left(  U_{k}\right)  \right]  \approx
\mathop{\rm dist}\left[  \Phi\left(  U_{k}\right)  ,\bigcup_{j:\ j\neq k}%
\Phi\left(  U_{j}\right)  \right]  \gtrsim\nu,\text{ for }1\leq k\leq3,
\]

\end{definition}

\begin{conjecture}
\label{alternate}Let $0<\delta<1$ and $\kappa>\frac{10}{\delta}$. The square
function inequality (\ref{prob ext square}) holds in $\mathbb{R}^{3}$ \emph{if
and only if} for every $q>3$ there is $\nu>0$ such that the disjoint
\emph{smooth Alpert square function }trilinear inequality $\mathcal{A}%
_{\mathop{\rm disj}\nu}^{\kappa,\delta,\mathop{\rm square}}\left(  \otimes
_{3}L^{\infty}\rightarrow L^{\frac{q}{3}};\varepsilon\right)  $ holds for all
$\varepsilon>0$.
\end{conjecture}

It should be possible to prove this square function variant by tracing through
the proof of Theorem \ref{main Alpert} above and making modifications for the
square function. See for example \cite{RiSa2}, where these modifications are
carried out for `father' wavelets $\varphi_{I}$ smoothly adapted to a square
$I$, and their projections $\bigtriangleup_{I}f\equiv\left\langle
f,\varphi_{I}\right\rangle \varphi_{I}$. However, an additional difficulty
arises for smooth Alpert wavelets $h_{I;\kappa}^{\eta}$, since the
pseudoprojections $\bigtriangleup_{I;\kappa}^{\eta}f\equiv\left\langle
S_{\kappa,\eta}^{-1}f,h_{I;\kappa}\varphi_{I}\right\rangle h_{I;\kappa}^{\eta
}$ involve an operator $S_{\kappa,\eta}^{-1}$ that is bounded only on $L^{p}$
for $1<p<\infty$, and \emph{not} on $L^{\infty}$. In this case, parabolic
rescalings must be performed with $f\in L^{p}\left(  U\right)  $, which
introduces an additional growth factor $C_{p}2^{\frac{2}{p}s}$, that turns out
to be harmless because one can take $p$ arbitrarily large. As mentioned
earlier, the proof of this conjecture will be addressed in a future paper.

Conjecture \ref{alternate} can be used to give an alternate proof of the
probabilistic analogue of Fourier extension in \cite{Saw7}. In fact, the case
$n=3$ of Proposition 34 in \cite{Saw7} says that for $q>3$, there is
$\varepsilon_{q}>0$ such that for every $s\in\mathbb{N}$, and every $f\in
L^{q}\left(  U\right)  $, we have,
\begin{equation}
\mathbb{E}_{\pm}\left\Vert \mathcal{E}\left[  \left(  \pm\mathsf{Q}_{U}%
^{s}\right)  ^{\spadesuit}f\right]  \right\Vert _{L^{q}\left(  B\left(
0,2^{s}\right)  \right)  }\lesssim2^{-s\varepsilon_{q}}\left\Vert f\right\Vert
_{L^{q}\left(  U\mathbb{R}^{n-1}\right)  },\label{eps}%
\end{equation}
where $\mathbb{E}_{\pm}$ denotes the average over the `martingale transforms'
$\left(  \pm\mathsf{Q}_{U}^{s}\right)  ^{\spadesuit}f$, and the implied
constant depends on $q$ and $U$, but is independent of $s\in\mathbb{N}$.
Indeed, (\ref{eps}) is easily obtained by computing the norms when $p=2$ and
$4$, and then using the expectation $\mathbb{E}_{\pm}$ to eliminate a large
number of off diagonal terms in the $L^{4}$ estimate, resulting in a geometric
decay in $s$ that exactly balances the growth in $s$ for the $L^{2} $ estimate
when $q=3$. Moreover, we can enlarge the ball $B\left(  0,2^{s}\right)  $ to
$B\left(  0,2^{\left(  1+\delta\right)  s}\right)  $ provided $\delta\leq
C\varepsilon$. See \cite[Section 5]{Saw7} for details.

Then using Khintchine's inequality, we obtain the square function formulation
of (\ref{eps}), namely that for $q>3$, there is $\varepsilon_{q}>0$ such that
for every $s\in\mathbb{N}$, and every $f\in L^{q}\left(  U\right)  $, we have,%
\[
\left\Vert \mathcal{S}_{\mathop{\rm Fourier}}\mathsf{Q}_{U}^{s}f\right\Vert
_{L^{q}\left(  B\left(  0,2^{\left(  1+\delta\right)  s}\right)  \right)
}\lesssim2^{-s\varepsilon_{q}}\left\Vert f\right\Vert _{L^{p}\left(  U\right)
},\ \ \ \ \ s\in\mathbb{N}.
\]
Thus we conclude that $\mathcal{A}_{\mathop{\rm disj}\nu}^{\kappa
,\delta,\mathop{\rm square}}\left(  \otimes_{3}L^{\infty}\rightarrow
L^{\frac{q}{3}};\varepsilon\right)  $ holds, even with a negative exponent
$\varepsilon$. Now Conjecture \ref{alternate} completes the alternate proof of
the square function formulation (\ref{prob ext square}) of the probabilistic
Fourier extension theorem in the case $n=3$.

\end{document}